\begin{document}
\def\R{\mathbb R}
\def\N{\mathbb N}
\def\Z{\mathbb Z}
\def\C{\mathbb C}
\def\Co{\mathcal{C}}
\def\F{\mathcal{F}}
\def\S{\mathbb {S}}
\def\1{\mathbf{1}}
\newtheorem{th-def}{Theorem-Definition}[section]
\newtheorem{theo}{Theorem}[section]
\newtheorem{lemm}[theo]{Lemma}
\newtheorem{prop}[theo]{Proposition}
\newtheorem{defi}[theo]{Definition}
\newtheorem{cor}[theo]{Corollary}
\newtheorem{exam}[theo]{Example}
\newtheorem{Rem}[theo]{Remark}
\newtheorem{Proof of}[theo]{Proof of main Theorem}
\def\dem{\noindent \textbf{Proof: }}
\def\rem{\indent \textsc{Remark. }}
\def\rems{\indent \textsc{Remarks. }}
\def\fin{  $\square$}
\def\ep{\varepsilon}
\def\uep{u^{\varepsilon}}
\def\Wep{W^{\varepsilon}}
\def\qep{a^{\varepsilon}}
\def\Sep{S^{\varepsilon}}
\def\Qep{Q^{\varepsilon}}
\def\Zep{Z^{\varepsilon}}
\def\vep{v^{\varepsilon}}
\def\wep{w^{\varepsilon}}
\def\fep{f^{\varepsilon}}
\def\gep{g^{\varepsilon}}
\def\hep{H^{\varepsilon}}
\def\aep{\alpha_{\varepsilon}}
\def\ph{\phi^{\ep}}
\def\X{\mathbb{X}}
\def\Y{\mathbb{Y}}
\def\H{\mathcal{H}}
\newcommand{\Node}[1]{\makebox[3mm]{#1}}
\def\cvput#1[#2]{\pnode(#1,1){#1} \pscircle*(#1,1){.1} \rput(#1,.5){$#2$}}
\def\vput#1{\cvput#1[#1]}

\newcommand{\edgebr}[1]{\makebox[3mm,linecolor=red]{#1}}

\numberwithin{equation}{section}
\title{New characterization of  two-state  normal distribution
\footnote{This research was sponsored by the Polish National Science Center grant No. 2012/05/B/ST1/00626.} }

\author{Wiktor Ejsmont}
\date{Department of Mathematics and Cybernetics, \\ Wroclaw University of Economics, \\
 Address: Komandorska 118/120, 53-345 Wroclaw, Poland,
\\
wiktor.ejsmont@gmial.com}
\maketitle

{\small
\textbf{Abstract}. In this article we give a purely noncommutative criterion for the characterization of
two-state  normal distribution. We prove that  families of two-state  normal distribution
can be described by relations which is similar to the conditional expectation in free probability, but has no classical
analogue. We also show a generalization of Bo\.zejko, Leinert and Speicher's formula from \cite{BoLR} (relating moments
and noncommutative cumulants).
\\ \\
{{\bf Key words:} generalized two-state freeness; generalized free Meixner distribution, conditional expectation; Laha-Lukacs theorem, noncommutative quadratic regression. }\\ \\
\textbf{AMS  Subject Classification:}. 22E46, 53C35, 57S20.} \\
\\
\newpage
\section{Introduction}
The original motivation for this paper is a desire to understand the  results about two state conditional expectation which were shown in  \cite{BoBr2}.    They   proved that if the 	
``outer'' state satisfies the condition  quadratic conditional variances,  then moment generating function satisfies some relation. An open problem in this area is a converse implication of their theorem. 
We will show that this relation  satisfies the condition from Theorem 2.1 of \cite{BoBr2}  and we extend this theorem (but that is not the main purpose of this article). 
The main goal of this paper is to construct a  new condition connected with the ``outer'' state which  gives us a new characterization of  two-state normal distribution.

The study of a random variable in conditionally free probability has been an active research field   during the last decade - see works \cite{An4,An1,An3,AnMlotko,BoWys,BoBr2,Hasebe,KrystekWoja1,KrystekWoja2,Krystek1,Mlotko2,Mlotko1,PopaWang,Wojakowski}. 
It is common in conditionally free probability, that its  properties, to a large extent, are analogous to those of the classical and free probability. 
The main aim of this paper is to produce a new characterization of the
  two state normal laws, which is close to the quadratic regression
property, but with no analog to the classical condition.
As an example, consider two random variables which have the same distribution (because the result is more transparent with this assumption). Suppose that $\mathbb{X}$, $\mathbb{Y}$  are $c$-free, self-adjoint, non-degenerate, centered and have the same distribution. Then  $\mathbb{X}$ and
$\mathbb{Y}$ have two-state normal laws (with respect to some states $(\varphi,\psi)$, which we will discuss later in section 2) if and only if  there exist constants $ a $, $ b $ 
such that
\begin{align} 
\varphi\big((\mathbb{X}-\mathbb{Y})^2\S^n\big)&= b \varphi(\S^n),\label{eq:wstepwzopr1}
\\ 
\varphi\big((\X-\Y)\S^n(\X-\Y)\big)&=\varphi\big([(1- b )\S^2+(-2 a + a  b )\S+( a ^2+ b ^2)\mathbb{I}]\S^n\big),
\label{eq:wstepwzopr2}
\end{align}
where $\S=\X+\Y$. This result is unexpected because in commutative and free
probability we have $$\varphi\big((\mathbb{X}-\mathbb{Y})^2\S^n\big)=\varphi\big((\X-\Y)\S^n(\X-\Y)\big),$$
for any classical and free variables $\mathbb{X}$ and $\mathbb{Y}$.
We also  show that equation \eqref{eq:wstepwzopr1} is equivalent with two different conditions.

At this point it is worth mentioning  about the characterization of type Laha-Lukacs in noncommutative and classical probability.  In \cite{LL,Wesolowski1}, all the classical random variables and processes of Meixner type using a quadratic regression property were  characterized. In free probability   Bo\.zejko, Bryc and Ejsmont  proved that the first conditional linear  moment and 
 conditional quadratic variances  characterize free Meixner laws (Bo\.zejko and Bryc \cite{BoBr}, Ejsmont \cite{Ejs}). 
Laha-Lukacs type  characterizations  of random variables in free probability are also studied by Szpojankowski,  Weso\l owski \cite{SzWes}.
 They give a characterization of noncommutative
free-Poisson and free-Binomial variables by properties of the first two conditional moments, 
which mimics Lukacs type assumptions known from classical probability. 
Similar results have been obtained in boolean probability by Anshelevich  \cite{An2}.  He showed that in the boolean theory the Laha-Lukacs property characterizes only the Bernoulli distributions. 
  It is worthwhile to mention the work of Bryc \cite{Br}, where the Laha-Lukacs property for $q$-Gaussian processes was shown. Bryc proved that classical processes corresponding to operators which satisfy a $q$-commutation relations, have linear regressions and quadratic conditional variances. 

The paper is organized as follows. In section 2 we review basic conditionally free probability, two-state normal laws  and  the statement of the main result. Next in the third section we quote complementary facts, lemmas and indications. In the fourth section we look more closely at non-crossing partitions with the first and last elements in the same block. In this section we also give extended version of Theorem 2.1 of \cite{BoBr2} (Theorem \ref{twr:7}) and generalize the Bo\.zejko, Leinert and Speicher's identity. Finally, in section 5 we  prove our main results.  

\section{Basic facts about two-state freeness condition}
Let $\mathcal{A}$ be a unital *-algebra with two-states $\varphi,\psi: \mathcal{A}\rightarrow \mathbb{C}$. We assume that states $\varphi$  fulfill the usual assumptions of positivity and normalization, and we assume the tracial property  $\psi(ab) =  \psi(ba)$ for $\psi$, but not
for $\varphi$.
A typical model of an algebra with two-states is a group algebra of a group
$G=*G_i$, where $*$ is a free product of groups $G_i$. Here $\varphi$ is the boolean product of the
individual states, the simplest example
is the free product of integers, $G_i = \mathbb{X}$, where $G_i$ is a free group with an arbitrary
number of generators, and $\varphi$ is the Haagerup state, $\phi(x) = r^{|x|}$, where $|x|$ is the
length of word $x\in G, -1\leq r\leq 1$, and state $\psi$  is $\delta_e$. For the details see \cite{Bozej1,Bozej2,BoBr2}.

A self-adjoint element $\X \in \mathcal{A}$ with moments that fulfill appropriate growth condition
defines a pair $(\mu, \nu)$ of probability measures on $\mathbb{R}$ such that 

 \begin{align} 
\varphi(\X^n)=\int_\mathbb{R} x^n  \mu(dx) \textrm{ and } \psi(\X^n)=\int_\mathbb{R} x^n  \nu(dx). 
\end{align}
We will refer to the measures  $\mu, \nu$  as the $\varphi$ -law and the  $\psi$-law of $\X$, respectively. In this paper we assume that  $\mu$ and $\nu$ are compactly supported probability measures, so moments  do not grow faster than exponentially.
 
\begin{defi}
Let $\pi= \{V_1, . . . , V_p\}$ be a partition of the linear ordered set
${1, \dots , n}$, i.e. the $V_i \neq \emptyset $ are ordered and disjoint sets whose union is $\{1, \dots , n\}$.
Then $\pi$ is called non-crossing if $a, c \inV_i$ and $b, d \inV_j$  with $a < b < c < d$ implies
$i = j$.

The sets $V_i\in \pi$ are called blocks. In a non-crossing partition ƒ$\pi$, a block $V_i$ is inner if for some $a, b \notin V_i$ (where $a$ and $b$ are  in  some other block of the partition $\pi$) and all
$x \in V_i , a <x< b$, otherwise it is called outer.
Family of all outer (resp. inner) blocks of $\pi$ will be denoted by $ Out(\pi)$ (resp. $Inn(\pi)$).
We will denote the set of all non-crossing partitions of the set $\{1, . . . , n\}$ by $NC(n)$.
\end{defi}

\begin{defi}
The free (non-crossing) cumulants are the $k$-linear maps $r_k : \mathcal{A}^k  \to\mathbb{C}$ ($r_k=r_k^\psi=r_k^\nu$) defined  by the recursive  formula (connecting them with mixed moments see \cite{NS})
\begin{align}
\psi(\mathbb{X}_{1}\mathbb{X}_{2}\dots \mathbb{X}_{n}) = \sum_{\nu \in NC(n)}r_{\nu}(\mathbb{X}_{1},\mathbb{X}_{2},\dots ,\mathbb{X}_{n}),\label{eq:DefinicjaKumulant}
\end{align}
where 
\begin{align}
r_{\nu}(\mathbb{X}_{1},\mathbb{X}_{2},\dots ,\mathbb{X}_{n}):=\Pi_{B \in \nu}r_{|B|}(\mathbb{X}_{i}:i \in B) ,
\end{align}
where $\mathbb{X}_{1},\mathbb{X}_{2},\dots ,\mathbb{X}_{n}\in \mathcal{A}.$
With each set of $\X_1, \dots , \X_n \in \mathcal{A}$ and a pair of states ($\varphi,\psi$) we associate the two-state free cumulants $R_k = R_{k}^{(\varphi,\psi)}= R_{k}^{(\mu,\nu)}$, $k =1, 2,\dots,$ which are  multilinear functions $R_k:\mathcal{A}^k:\rightarrow \mathbb{C}$ defined by
 \begin{align} 
\varphi(\X_1 \dots  \X_n)&= \sum_{k=1}^n  \sum_{s_{1}=1<s_2<\dots< s_k\leq n} R_k(\X_{s_1},\dots,\X_{s_k})\varphi(\X_{s_{k}+1}\dots \X_{s_{n}})\prod_{r=1}^{k-1} \psi(\prod_{j=s_r+1}^{s_{r+1}-1}\X_j). \label{eq:DefinicjaKumulant1}
\end{align}
The above equation is equivalent to 
 \begin{align} 
\varphi(\X_1 \dots  \X_n)&= \sum_{\nu \in NC(n)} \prod _{B \in Out(\nu)} R_{|B|}(\mathbb{X}_{i}: i\in B)
\prod _{B \in Inn(\nu)} r_{|B|}(\mathbb{X}_{i}: i\in B). \label{eq:DefinicjaKumulant2}
\end{align}
\end{defi}
Sometimes we will write $r_{k}(\mathbb{X})=r_{k}( \mathbb{X},\dots ,\mathbb{X} )$ and $R_{k}(\mathbb{X})=R_{k}( \mathbb{X},\dots ,\mathbb{X} )$. 
Fix $\X \in \mathcal{A}$ and consider the following 
power series
 \begin{align} 
r(z)=r_\nu(z)&=\sum_{i=0}^{\infty}r_{i+1}(\X,\dots,\X)z^{i}, \nonumber \\
R(z)=R_\mathbb{X}(z)=R_{(\mu,\nu)}(z)&=\sum_{i=0}^{\infty}R_{i+1}(\X,\dots,\X)z^{i}, \nonumber \\
M_\nu(z)&=\sum_{i=0}^{\infty}z^{i}\psi(\X^i), \nonumber \\
M_{\mu}(z)&=\sum_{i=0}^{\infty}z^{i}\varphi(\X^i) \nonumber.
\end{align}
For our purposes, the most convenient definition is the
following. The Cauchy-Stieltjes transform of $\mu$ can be expanded into the following formal power series
\begin{align}
G_\mu(z)=\int_{\mathbb{R}}\frac{1}{z-y}\mu(dy)=\sum_{n=0}^{\infty} m_n(\mu)\frac{1}{z^{n+1}}=\frac{1}{z}M_\mu\left(\frac{1}{z}\right),
\end{align}
where $m_n(\mu)$ is the $n$-th moment of $\mu$.
Bellow we introduce a definition of free independence  (see \cite{JasiulisKula,DelengaSniady,KulaWysocz,KrystekWoja1,KrystekWoja2,Lenczewski0,Lenczewski,Lenczewski1,Michna}). 
\begin{defi} (A) We say that subalgebras $\mathcal{A}_1,\mathcal{A}_2,\dots$ are
$\psi$-free if for every choice of $i_{1}\neq i_{2}\dots \neq i_{n}$ and every choice of $\X_i\in\mathcal{A}_i$ such that
 $\psi(\X_i) = 0$  we have
 \begin{align} 
\psi(\X_1\X_2\dots\X_n)=0.
\end{align}
(B) This family is $c$-freely independent if it is $\psi$-freely independent and, under the same
assumptions on $\X_1,\X_2,\dots,\X_n$ also
 \begin{align} 
\varphi(\X_1\X_2\dots\X_n)=\prod_{k=1}^n\varphi(\X_k).
\end{align} 
 \label{defi:DefinicjaNiezalDwustanowej}
\end{defi}
\noindent The above definition is equivalent to the following.
\begin{defi}
We say that subalgebras $\mathcal{A}_1,\mathcal{A}_2,\dots$ are $c$-free if for every
choice of $\X_1, \dots , \X_n\in \bigcup_j\mathcal{A}_j$ we have
\begin{align} 
r_n(\X_1,\dots,\X_n)=0 \textrm{ and }
R_n(\X_1,\dots,\X_n)=0 \textrm{ except if all $\X_j$ come from the same algebra.} \nonumber
\end{align}
\end{defi}

\begin{Rem}
It is important to note that Bo\.zejko and Bryc use the following definition of independence: 
\\ 
We say that subalgebras $\mathcal{A}_1,\mathcal{A}_2,\dots$ are ($\varphi,\psi$)-free if for every
choice of $\X_1, \dots , \X_n\in \bigcup_j\mathcal{A}_j$ we have
\begin{align} 
R_n(\X_1,\dots,\X_n)=0 \textrm{ except if all $\X_j$ come from the same algebra.}
\end{align}
It is important to note that ($\varphi,\psi$)-freeness is weaker than $c$-freeness. The $c$-freeness implies  ($\varphi,\psi$)-freeness  -- see Lemma 1.1 from \cite{BoBr2}.
\end{Rem}


\begin{defi}[$c$-free convolution]
Using free cumulants, we can define in a uniform way the free
convolution $\boxplus$ for example:
 \begin{align} 
r_n^{\nu_1\boxplus\nu_2}=r_n^{\nu_1}+r_n^{\nu_2}.
\end{align}

The two-state free (or conditionally free; $c$-free convolution; these terms will be used interchangeably)
convolution $\boxplus_c$ is an operation on pairs of measures, defined as follows: $(\mu_3,\nu_3)=(\mu_1,\nu_1)\boxplus_c(\mu_2,\nu_2)$ if and only if $\nu_3=\nu_1\boxplus\nu_2$ and
 \begin{align} 
R_n^{(\mu_3,\nu_3)}=R_n^{(\mu_1,\nu_1)}+R_n^{(\mu_2,\nu_2)}.
\end{align}
\end{defi}
\subsection{Two-state normal distribution and the main result}

Any probability measure $\mu$ on the real line,  all of whose moments are finite, has two
associated~sequences of Jacobi parameters $\alpha_i,\beta_i$ for example, $\mu$ is the spectral measure of the tridiagonal matrix

\begin{align}
\left(\begin{array}{c c c c c}
\alpha_0, & \beta_0, & 0, & 0, & \ddots  \\ 
1, & \alpha_1, & \beta_1, & 0, & \ddots
\\ 
0, & 1, & \alpha_2, & \beta_2, & \ddots
\\ 
0, & 0, & 1, & \alpha_3, &  \ddots
\\ 
\ddots & \ddots & \ddots & \ddots &  \ddots
\end{array}\right).
\end{align} 
We will denote this fact by
\begin{align}
J(\mu)=\left(\begin{array}{c c c c}
\alpha_0, & \alpha_1, & \alpha_2, & \dots  \\ 
\beta_0, & \beta_1, & \beta_2, & \dots
\end{array}\right)\label{eq:Jacobi1}
\end{align} 
with $\alpha_n(\mu):=\alpha_n,$ $\beta_n(\mu):=\beta_n$. These parameters are related to the moments of the
measure via the  Accardi-Bo\.zejko \cite{AcardiBozejko} formulas.
If the measure $\mu$ has all moments, then by a theorem of Stieltjes (see \cite{AkhGlaz}), it can be
expressed as a continued fraction:
\begin{align} 
G_\mu(z)=\cfrac{1}{z-\alpha_0 -\cfrac{\beta_0}{z-\alpha_1-\cfrac{\beta_1}{z-\alpha_2-\cfrac{\beta_2}{\ddots}}}}. \label{eq:CiaglaFrakcja}
\end{align} 

If some $\beta_i=0$ the continued fraction terminates, that is the subsequent  $\alpha$ and  $\beta$~coefficients
can be defined arbitrarily. See \cite{Chihara} for more details. The monic orthogonal polynomials $P_n$ for $\mu$  satisfy the following  recursion relation
\begin{align} 
xP_n(x) = P_{n+1}(x) +\alpha_nP_n(x) +\beta_{n-1}P_{n-1}(x),
\end{align} 
with $P_{-1}(x) = 0$.


\begin{defi}
$\X$ is a two-state  normal (Gaussian) distribution if $R_{k}(\X)=0$ and $r_{k}(\X)=0$  for $k>2$.  
\label{defi:DwustanowyNormalny}
\end{defi}
Without the loss  of generality we can assume (in this paper), that  two-state  normal element $\X$  has Jacobi parameters
\begin{align}
J(\mu_{ a , b })&=\left(\begin{array}{c c c c c }
 a , & 0, &0,& 0, & \dots  \\ 
 b , & 1, & 1,& 1 & \dots
\end{array}\right), \label{eq:JacobiDefiNoramal1} 
\\
J(\nu)&=\left(\begin{array}{c c c c c }
0, & 0, & 0,& 0, & \dots  \\ 
1, & 1, & 1,& 1 & \dots
\end{array}\right),\label{eq:JacobiDefiNoramal2} 
\end{align}
  which means that  $\mu_{a,b}$ (where $a\in \mathbb{R}$ and $b>0$ -- this assumption on $a$ and $b$ will be valid till the end of the work) is free Meixner distribution and $\nu$ is normalized Wigner's semicircle law. The first cumulants of this distribution are as follows  $R_1(\X)=a$, $R_2(\X)=b$, $r_1(\X)=0$ and $r_2(\X)=1$.   

For particular values of $ a $ and $ b $ the law of $\mu_{ a , b }$ is (see \cite{BoBr}):
\begin{itemize}
\item the Wigner's semicircle law if $ a =0$ and $ b =1$;
\item  the free Poisson  law if $ a  \neq 0$ and $ b  = 1$;
\item  the free Pascal (negative binomial) type law if $ b <1$ and $ a ^2>4(1- b )$;
\item  the free Gamma  law if $ b <1$ and $ a ^2=4(1- b )$;
\item the pure free Meixner  law if $ b <1$ and $ a ^2<4(1- b )$;
\item the free binomial  law  $ b >1$.
\end{itemize}

\subsection{The main result}

Now we can state the main result of this paper.
\begin{theo}
Suppose $\mathbb{X}$, $\mathbb{Y}$  are $c$-free, self-adjoint, non-degenerate, $\psi(\mathbb{X})=\psi(\mathbb{Y})=0$, $\psi(\mathbb{X}^2+\mathbb{Y}^2)=1$, $\varphi(\mathbb{X})=\alpha a ,\varphi(\mathbb{Y})=\beta a $ , $\varphi((\mathbb{X}+\mathbb{Y})^2)= b + a ^2$, $\beta R_k(\X)= \alpha R_k(\Y),\beta r_k(\X)= \alpha r_k(\Y) \textrm{ for some }\alpha,\beta>0,\alpha+\beta=1$  and all integers  $k\geq 1$. Let $\S=\X+\Y$, then the following  statements are equivalent:
\begin{enumerate}
\item  $\X$, $\Y$ have a two-state normal distribution, 
\item 
\begin{align} 
\varphi\big((\beta\mathbb{X}-\alpha\mathbb{Y})^2\S^n\big)&= \alpha\beta b \varphi\big(\S^n\big),
  \label{eq:PomocniczyNormalny2}
\\  
\varphi\big((\beta\X-\alpha\Y)\S^n(\beta\X-\alpha\Y)\big)&=\alpha\beta\varphi\big([(1- b )\S^2+(-2 a + a  b )\S+( a ^2+ b ^2)\mathbb{I}]\S^n\big),
\label{eq:PomocniczyNormalny}
\end{align}
\item  \eqref{eq:PomocniczyNormalny} and 
\begin{align} 
\varphi\big((\beta\X-\alpha\Y)\S^n(\beta\X-\alpha\Y)\big)&=
\alpha\beta b \psi\big(\S^n\big),
\label{eq:PomocniczyNormalny3}
\end{align}
\item  \eqref{eq:PomocniczyNormalny} and 
\begin{align} 
\varphi\big((\beta\X-\alpha\Y)(\X+\Y)(\beta\X-\alpha\Y)\S^n\big) =\alpha\beta a \varphi\big( (\beta\X-\alpha\Y)^2\S^n\big),
\label{eq:PomocniczyNormalny4}
\end{align}
\end{enumerate}
where  all above relations hold for all non-negative integers $n\geq 0.$

\label{twr:8}
\end{theo}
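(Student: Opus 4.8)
The plan is to run the whole argument in terms of the two auxiliary elements $\mathbb{T}:=\beta\X-\alpha\Y$ and $\S=\X+\Y$, noting that the substitution $\X=\alpha\S+\mathbb{T}$, $\Y=\beta\S-\mathbb{T}$ is invertible. The first step is to convert the hypotheses into cumulant identities. Because $\X$ and $\Y$ are $c$-free, every mixed two-state cumulant $R_k$ or $r_k$ whose arguments come from both algebras vanishes, so by multilinearity only the pure-$\X$ and pure-$\Y$ contributions survive when a cumulant is evaluated on a string of $\mathbb{T}$'s and $\S$'s. Combined with the proportionality hypotheses $\beta R_k(\X)=\alpha R_k(\Y)$ and $\beta r_k(\X)=\alpha r_k(\Y)$ -- equivalently $R_k(\X)=\alpha R_k(\S)$, $R_k(\Y)=\beta R_k(\S)$ and likewise for $r_k$, using additivity of cumulants under $c$-freeness together with $\alpha+\beta=1$ -- this yields the two facts I will use constantly: a cumulant with exactly one entry $\mathbb{T}$ and all others $\S$ is zero, while a cumulant with exactly two entries $\mathbb{T}$ and all others $\S$ equals $\alpha\beta$ times the corresponding cumulant of $\S$. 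I also record the normalizations forced by the remaining assumptions, namely $R_1(\S)=a$, $R_2(\S)=b$, $r_1(\S)=0$ and $r_2(\S)=1$, so that statement~1 becomes exactly the assertion that $R_k(\S)=r_k(\S)=0$ for every $k\ge 3$.

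With these identities I feed each word occurring in \eqref{eq:PomocniczyNormalny2}--\eqref{eq:PomocniczyNormalny4} into the moment--cumulant expansion \eqref{eq:DefinicjaKumulant2}. The single-$\mathbb{T}$ vanishing annihilates every non-crossing partition in which the two $\mathbb{T}$-labelled positions lie in different blocks; hence only partitions placing both $\mathbb{T}$'s in one common block survive, that block is automatically outer, and it emits the universal scalar $\alpha\beta$. This is precisely the combinatorics of non-crossing partitions with two distinguished positions in a single outer block analysed in Section~4, and I will evaluate the resulting restricted sums with the generalized Bo\.zejko--Leinert--Speicher identity proved there (extending \cite{BoLR}). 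It is cleanest to encode the relations as identities of Cauchy transforms: writing $R=(z-\S)^{-1}$ and summing against $z^{-n-1}$, relation \eqref{eq:PomocniczyNormalny2} becomes $\varphi(\mathbb{T}^2R)=\alpha\beta b\,G_\mu(z)$, relation \eqref{eq:PomocniczyNormalny3} becomes $\varphi(\mathbb{T}R\mathbb{T})=\alpha\beta b\,G_\nu(z)$, and relation \eqref{eq:PomocniczyNormalny} expresses $\varphi(\mathbb{T}R\mathbb{T})$ algebraically through $G_\mu$ after using $\varphi(\S^2R)=z^2G_\mu-z-a$ and $\varphi(\S R)=zG_\mu-1$; relation \eqref{eq:PomocniczyNormalny4} is handled identically, the intermediate factor $\S$ between the two copies of $\mathbb{T}$ entering only through $R_1(\S)=a$ and $r_1(\S)=0$.

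For the implication $1\Rightarrow 2,3,4$ I specialize to all cumulants of order $\ge 3$ being zero, so that only blocks of size $\le 2$ occur. In \eqref{eq:PomocniczyNormalny2} the forced pair $\{1,2\}$ contributes $\alpha\beta R_2(\S)=\alpha\beta b$ and the trailing $\S$'s rebuild $\varphi(\S^n)$; in \eqref{eq:PomocniczyNormalny3} the outer pair $\{1,n+2\}$ encloses all intermediate $\S$'s, which are therefore inner and rebuild $\psi(\S^n)$, giving $\alpha\beta b\,\psi(\S^n)$; and \eqref{eq:PomocniczyNormalny} then follows from the two-state normal link between $G_\mu$ and $G_\nu$, which is the content of the extension of Theorem~2.1 of \cite{BoBr2} (Theorem~\ref{twr:7}). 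The converse is the core of the theorem, and I would prove it by a simultaneous induction on the order -- or, equivalently, by solving the coupled functional equation of the previous paragraph for the pair of transforms $(R_\S,r_\nu)$. The two relations must be used in tandem: each $\varphi$-moment mixes the $R$-cumulants (outer blocks) with the $r$-cumulants (inner blocks), so that \eqref{eq:PomocniczyNormalny2} constrains the outer side while \eqref{eq:PomocniczyNormalny} constrains the $\mu$--$\nu$ relation and hence the inner side; comparing the top-order contribution, where the only admissible partition is the single all-embracing block giving $\alpha\beta R_{n+2}(\S)$ (respectively its $\psi$-counterpart for $r_{n+2}(\S)$), against a right-hand side built only from lower cumulants forces the new cumulant to vanish. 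The mutual equivalences $2\Leftrightarrow 3\Leftrightarrow 4$ then follow, given the shared relation \eqref{eq:PomocniczyNormalny}, by passing between $\varphi(\S^n)$, $\psi(\S^n)$ and the quadratic expression through the same Cauchy-transform identities.

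The step I expect to be hardest is this converse: one must show the moment relations do not merely hold for, but actually single out, the two-state normal law, and the delicate point is to separate cleanly the outer block carrying the two copies of $\mathbb{T}$ (weighted by the $R$-cumulants of $\S$) from the nested inner structure (weighted by the $r$-cumulants, i.e. by $\psi$), since these are genuinely coupled inside every $\varphi$-moment. Making the induction watertight therefore requires the precise closed form of the first-and-last-in-one-block sums from Section~4 rather than only their leading term. A secondary difficulty is to confirm that \eqref{eq:PomocniczyNormalny} genuinely encodes the two-state normal relation between $\mu$ and $\nu$, and not something strictly weaker, which is exactly where the extension Theorem~\ref{twr:7} of \cite{BoBr2} enters.
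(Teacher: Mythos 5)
Your plan is correct in outline and rests on the same pillars as the paper's proof -- the cumulant identities for $\mathbb{T}=\beta\X-\alpha\Y$ are exactly Lemmas \ref{lem:3} and \ref{lemm:wlasnosciFunkcjiFi} (reducing the four relations to statements about $\varphi_2$, $\varphi_{\shortparallel}$ and $\varphi_3$ of $\S$), and the Section 4 machinery you invoke (the generalized Bo\.zejko--Leinert--Speicher identity, the $C_{\shortparallel}$ formalism, Theorem \ref{twr:7}) is precisely what the paper uses -- but you diverge in execution at both ends. For $1\Rightarrow 2$ the paper does not argue with size-$\le 2$ blocks: it identifies the law of $\S$ via Lemma \ref{lem:5}, which rests on the Anshelevich--M{\l}otkowski convolution-power result (Theorem \ref{twr:1}), then obtains \eqref{eq:PomocniczyNormalny} from Proposition \ref{lem:6} and \eqref{eq:PomocniczyNormalny2} from an explicit continued-fraction computation; your direct combinatorial route is more elementary and bypasses Theorem \ref{twr:1}, though for \eqref{eq:PomocniczyNormalny} itself you still must pass through the $\mu$--$\nu$ link of Proposition \ref{lem:6}, as you acknowledge. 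For $2\Rightarrow 1$ the paper never performs your cumulant-order induction; it runs your ``equivalent'' functional-equation alternative: \eqref{eq:PomocniczyNormalny} is \eqref{eq:PierwszyOstatniorazVarphi}, Proposition \ref{lem:6} forces $\mu=\mu_{a,b}$ together with the quadratic \eqref{eq:MomentyGenerujaceFunkcjaLemat} for $C_{\shortparallel}$, Theorem \ref{twr:7} with $\tilde a=\tilde b=0$ converts \eqref{eq:PomocniczyNormalny2} into $C_{\shortparallel}(z)=bz^2M_\nu(z)$, and substitution yields $z^2M_\nu^2(z)-M_\nu(z)+1=0$, i.e.\ $\nu$ is semicircular; the proportionality hypotheses then transfer normality from $\S$ to $\X$ and $\Y$ individually. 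What the paper's route buys is that it settles the converse in closed form without any order-by-order bookkeeping; what yours buys is a forward direction free of the external semigroup theorem. One detail of your induction sketch needs repair: under statement 2 the new $r$-cumulant does not enter through an ``all-embracing block giving the $\psi$-counterpart $r_{n+2}$'' -- that picture is literally available only under statement 3, where \eqref{eq:PomocniczyNormalny3} is assumed. With only \eqref{eq:PomocniczyNormalny2} and \eqref{eq:PomocniczyNormalny} in hand, the top $r$-term at order $n$ is $\alpha\beta b\,r_n(\S)$, arising from the pair $\{1,n+2\}$ enclosing a single inner block of size $n$, against $\alpha\beta(1-b)b\,r_n(\S)$ plus lower-order terms on the right, so one isolates $r_n$ after dividing by $b^2\neq 0$; since your declared fallback is exactly the paper's functional-equation argument, this is a fixable slip rather than a gap. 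Finally, a point in your favor: you explicitly use both proportionalities $\beta R_k(\X)=\alpha R_k(\Y)$ and $\beta r_k(\X)=\alpha r_k(\Y)$ to annihilate single-$\mathbb{T}$ blocks whether outer or inner; the paper's Lemma \ref{lemm:wlasnosciFunkcjiFi} states only the $R$-hypothesis even though inner ($r$-weighted) blocks containing one $\mathbb{T}$ must also vanish for its conclusion, so your bookkeeping is the more careful one.
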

\begin{Rem}
In free probability  we can formulate the following theorem (see \cite{BoBr,Ejs}).

\begin{theo}
Suppose that $\mathbb{X}$, $\mathbb{Y}$  are free, self-adjoint, non-degenerate $\varphi(\mathbb{X})=\alpha a ,\varphi(\mathbb{Y})=\beta a $  and $\varphi(\mathbb{X}^2+\mathbb{Y}^2)=b+a^2$. Then $\mathbb{X}/\sqrt{\alpha}$ and  $\mathbb{Y}/\sqrt{\beta}$ have the free Meixner  laws $\mu_{a\sqrt{\alpha},b}$ and  $\mu_{a\sqrt{\beta},b}$, respectively, where $\alpha+\beta=1$, $a \in \mathbb{R},b>0$ if and only if
\begin{align} 
\varphi(\mathbb{X}|(\mathbb{X}+\mathbb{Y}))&={\alpha}(\mathbb{X}+\mathbb{Y}) +a\mathbb{I},\\
Var(\mathbb{X}|\mathbb{X}+\mathbb{Y}) &=\alpha\beta\big((1- b )(\mathbb{X}+\mathbb{Y})^2+(-2 a + a  b )(\mathbb{X}+\mathbb{Y})+( a ^2+ b ^2)\mathbb{I}\big)+a\mathbb{I} .  \label{eq:remofree}
\end{align} 
 We can easily show that equation \eqref{eq:remofree} is equivalent to $$\varphi((\beta\X-\alpha\Y)^2|\mathbb{X}+\mathbb{Y}) =\alpha\beta\big((1- b )(\mathbb{X}+\mathbb{Y})^2+(-2 a + a  b )(\mathbb{X}+\mathbb{Y})+( a ^2+ b ^2)\mathbb{I}\big),$$ so we see that  condition \eqref{eq:PomocniczyNormalny}  behaves like conditional variances in free probability. 
 \end{theo}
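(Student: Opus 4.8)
The plan is to prove both implications through the Biane--Voiculescu subordination for free additive convolution, which is the mechanism that converts statements about the conditional expectation $\varphi(\,\cdot\,|\,\S)$ into functional equations for Cauchy transforms. Throughout set $\S=\X+\Y$ and write $G_\S(z)=\varphi((z-\S)^{-1})$, and recall that $\beta\X-\alpha\Y=\X-\alpha\S$ since $\alpha+\beta=1$; as the remark indicates, the conditional variance hypothesis \eqref{eq:remofree} is then read in the equivalent shape $\varphi\big((\beta\X-\alpha\Y)^2\,\big|\,\S\big)=\alpha\beta\big[(1-b)\S^2+(-2a+ab)\S+(a^2+b^2)\mathbb{I}\big]$. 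Because $\varphi$ is tracial in the free setting, $\varphi\big((\beta\X-\alpha\Y)\S^n(\beta\X-\alpha\Y)\big)=\varphi\big((\beta\X-\alpha\Y)^2\S^n\big)$, so the two regression conditions are jointly encoded by the two partial Cauchy transforms $\varphi\big((\beta\X-\alpha\Y)(z-\S)^{-1}\big)$ and $\varphi\big((\beta\X-\alpha\Y)^2(z-\S)^{-1}\big)$.

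The key computational step is to evaluate these partial transforms for free $\X,\Y$. Let $\omega_\X,\omega_\Y$ be the subordination functions, so that $G_\S=G_\X\circ\omega_\X=G_\Y\circ\omega_\Y$ and $\omega_\X(z)+\omega_\Y(z)=z+1/G_\S(z)$, and recall Biane's identity $E_{W^*(\X)}[(z-\S)^{-1}]=(\omega_\X(z)-\X)^{-1}$. Using the module property of the trace-preserving conditional expectation (valid since $\varphi$ is tracial) one gets $\varphi(\X^k(z-\S)^{-1})=\varphi(\X^k(\omega_\X-\X)^{-1})$ for every $k$, hence
\[
\varphi(\X(z-\S)^{-1})=\omega_\X G_\S-1,\qquad \varphi(\X^2(z-\S)^{-1})=\omega_\X^2 G_\S-\omega_\X-\varphi(\X),
\]
and symmetrically for $\Y$. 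The only genuinely mixed quantity, $\varphi\big((\X\Y+\Y\X)(z-\S)^{-1}\big)$, is recovered without new input from $\S^2=\X^2+(\X\Y+\Y\X)+\Y^2$, giving $\varphi\big((\X\Y+\Y\X)(z-\S)^{-1}\big)=(z^2-\omega_\X^2-\omega_\Y^2)G_\S+1/G_\S$ (the mean terms cancel and $z-\omega_\X-\omega_\Y=-1/G_\S$). Expanding $(\beta\X-\alpha\Y)^2$ and combining these yields a closed expression for $\varphi\big((\beta\X-\alpha\Y)^2(z-\S)^{-1}\big)$ in terms of $z,\omega_\X,\omega_\Y,G_\S$ alone.

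For the harder direction (the regressions imply free Meixner), I would first feed the first-moment regression into $\varphi(\X(z-\S)^{-1})=\omega_\X G_\S-1$; matching it against $\varphi\big((\alpha\S+\cdots)(z-\S)^{-1}\big)=\alpha(zG_\S-1)+\cdots$ forces $\omega_\X$ to be affine in $z$ and in $1/G_\S$, of the shape $\omega_\X(z)=\alpha z+c+\beta/G_\S(z)$, with $\omega_\Y$ the complementary function. Substituting these affine forms into the closed expression from the previous paragraph collapses every $\omega$, so the conditional-variance regression becomes an identity relating only $z$ and $G_\S$. Clearing the single power of $1/G_\S$ turns it into a quadratic $A(z)G_\S^2+B(z)G_\S+C=0$ with $\deg A\le 2$, $\deg B\le 1$ and $C$ constant --- precisely the algebraic equation characterizing free Meixner Cauchy transforms (equivalently, an $R$-transform of the rational type $w/(1-aw-(b-1)w^2)$). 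Hence $\S$ is free Meixner; eliminating $z$ between the affine $\omega_\X$ and this quadratic produces the corresponding quadratic equation for $G_\X$, identifying $\X/\sqrt\alpha$ with $\mu_{a\sqrt\alpha,b}$ and, symmetrically, $\Y/\sqrt\beta$ with $\mu_{a\sqrt\beta,b}$.

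The converse (free Meixner implies the regressions) runs the same computation backwards: for free Meixner $\X/\sqrt\alpha,\Y/\sqrt\beta$ the $R$-transforms are the rational functions above, so $R_\S=R_\X+R_\Y$ endows $G_\S$ with the required quadratic relation and $\omega_\X$ with the required affine form, whence the two regression formulas follow by reading the partial-transform identities in reverse. I expect the main obstacle to be this second-order step: correctly assembling $\varphi\big((\beta\X-\alpha\Y)^2(z-\S)^{-1}\big)$ --- in particular handling the noncommutativity and the cross term $\X\Y+\Y\X$ --- and then matching the additive constants exactly. In this regard the constants displayed in \eqref{eq:remofree} should be handled with care, since the normalization $\varphi(\X)=\alpha a,\varphi(\Y)=\beta a$ forces $\varphi(\S)=a$ and hence a vanishing additive constant in $\varphi(\X\,|\,\S)=\alpha\S$, indicating minor typographical slips in the displayed affine term that do not affect the quadratic-regression substance of the statement.
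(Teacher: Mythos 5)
Your proposal is correct in substance but takes a genuinely different route from the paper --- which, in fact, offers no proof of this statement at all: the theorem sits inside a Remark and is quoted from Bo\.zejko--Bryc \cite{BoBr} and \cite{Ejs}, and the arguments the paper does give for its two-state analogues (Theorem \ref{twr:7}, Theorem \ref{twr:8}) are combinatorial, built on the non-crossing-partition decompositions $\varphi_k$ and $\varphi_{\shortparallel}$, the generating-function identities of Lemmas \ref{lem:2} and \ref{lem:4}, and the quadratic relation of Proposition \ref{lem:6}. You instead work analytically through Biane subordination, and your key identities check out: $\varphi(\X(z-\S)^{-1})=\omega_{\X}(z)G_{\S}(z)-1$ is the subordination repackaging of the resolvent identity underlying the cumulant proofs, since $\omega_{\X}(z)-1/G_{\S}(z)$ is exactly the free $R$-transform of $\X$ evaluated at $G_{\S}(z)$; consequently your affine form $\omega_{\X}(z)=\alpha z+\beta/G_{\S}(z)$ forced by the first-moment regression is precisely the $R$-transform proportionality that the cumulant approach extracts from the same hypothesis. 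Your second-order formula $\varphi(\X^{2}(z-\S)^{-1})=\omega_{\X}^{2}G_{\S}-\omega_{\X}-\varphi(\X)$ and the cross-term bookkeeping via $\S^{2}$ are correct, and substituting the affine subordinators does collapse the left-hand side of the variance condition to $\alpha\beta\bigl(z-a-1/G_{\S}(z)\bigr)$, after which clearing $1/G_{\S}$ yields a quadratic $A(z)G_{\S}^{2}+B(z)G_{\S}+1=0$ with $\deg A\le 2$, $\deg B\le 1$, as you claim. What your route buys is brevity and freedom from partition combinatorics; what it costs is the $W^{*}$-framework with a trace (Biane's identity and the module property of the trace-preserving conditional expectation), and that cost is instructive: your opening reduction $\varphi\bigl((\beta\X-\alpha\Y)\S^{n}(\beta\X-\alpha\Y)\bigr)=\varphi\bigl((\beta\X-\alpha\Y)^{2}\S^{n}\bigr)$ by traciality is legitimate here but is precisely what fails for the paper's main two-state theorem, where $\varphi$ is not tracial --- this is why the paper develops $\varphi_{\shortparallel}$ separately from $\varphi_{2}$. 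Two points must still be nailed down before the sketch is a proof: the step ``quadratic of that shape implies free Meixner'' is a genuine characterization that has to be invoked or proved (via the continued fraction \eqref{eq:CiaglaFrakcja} or Saitoh--Yoshida \cite{SY}), with the mean and variance normalizations used to pin $(a,b)$ and non-degeneracy used to discard the degenerate branch; and the identification of the individual laws should be completed by noting that the proportionality of $R$-transforms gives $\X$ the Jacobi parameters $(a\alpha,0,0,\dots;\,b\alpha,\alpha,\alpha,\dots)$, i.e.\ $\X/\sqrt{\alpha}\sim\mu_{a\sqrt{\alpha},b}$, exactly as in Lemma \ref{lem:5}. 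Finally, your diagnosis of the spurious $+a\mathbb{I}$ constants in \eqref{eq:remofree} is correct --- with $\varphi(\X)=\alpha a$ and $\varphi(\S)=a$, consistency forces $\varphi(\X\mid\S)=\alpha\S$ --- and the same typographical caution applies to the hypothesis $\varphi(\X^{2}+\Y^{2})=b+a^{2}$, which should read $\varphi\bigl((\X+\Y)^{2}\bigr)=b+a^{2}$ to match Theorem \ref{twr:8} and to make $\varphi(\S^{2})$ come out right under freeness.
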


\end{Rem}
\section{Complementary facts, lemmas and indications}
\begin{defi}
\noindent We introduce the notation  
 \begin{align} 
\varphi_k(\X^n)= \sum_{j=k}^n  \sum_{s_{1}=1<s_2=2<\dots<s_k=k<\dots< s_j\leq n} R_j(\X)\varphi(\X^{s_{n}-1-s_{j}}) \prod_{r=k}^{j-1} \psi(\X^{s_{r+1}-1-s_{r}}), \label{eq:Kstalych}
\end{align}
where $k\leq n$. The above equation corresponds to moments of $\varphi$ with the first $k$ elements in the same block. Analogously we can define  $\varphi_k(\X_1 \dots  \X_n)$.
\end{defi}
\begin{exam}  \noindent For $k=3$ and $n=5$, we get:
\begin{align}
\varphi_3(\X^5)=R_3(\X)\varphi(\X^2)+R_4(\X)\varphi(\X)+R_4(\X)\psi(\X)+R_5(\X).
 \end{align}
\end{exam}
\noindent The following lemma is a two-state version of  Lemma 2.4 in \cite{Ejs2} (the proof is also similar).
\begin{lemm}
 Let $\X$ be a self-adjoint element of the algebra $\mathcal{A}$ then 
 \begin{align} 
\varphi_k(\X^{n+k})= \sum_{j=1}^{n} \psi(\X^{j-1}) \varphi_{k+1}(\X^{n+k-j+1})  + R_k(\X)\varphi(\X^{n-k}) ,\label{eq:RownanieKombinatoryczne}
\end{align}  
where $k,n\geq 1$ and we take convention $\psi(\X^0)=1$.
\label{lem:1} 
 \end{lemm}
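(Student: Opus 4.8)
The plan is to prove \eqref{eq:RownanieKombinatoryczne} combinatorially, by unfolding the definition \eqref{eq:Kstalych} and peeling off the structure of the partition immediately after position $k$. Recall that $\varphi_k(\X^{n+k})$ collects exactly those non-crossing partitions of $\{1,\dots,n+k\}$ in which the positions $1,\dots,k$ all lie in one block $B$; since $B$ contains the first position it is automatically the outer block, so by the specialization of \eqref{eq:DefinicjaKumulant2} it is weighted by a single cumulant $R_{|B|}(\X)$, each maximal gap between consecutive elements of $B$ sums (over the inner blocks it can carry) to a $\psi$-moment of its length by \eqref{eq:DefinicjaKumulant}, and the positions lying to the right of $\max B$ sum to an unrestricted $\varphi$-moment, again by \eqref{eq:DefinicjaKumulant2}.

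First I would split the sum according to whether $B$ extends beyond position $k$. If $B=\{1,\dots,k\}$ exactly, the block contributes $R_k(\X)$ and the remaining positions $k+1,\dots,n+k$ form an unconstrained non-crossing partition, producing the isolated summand $R_k(\X)$ times the $\varphi$-moment of that tail; this accounts for the final term on the right-hand side. Otherwise $B$ has a first element $s_{k+1}=k+1+g$ beyond $k$, for some gap length $g\ge 0$: the $g$ positions $k+1,\dots,k+g$ sit strictly inside $B$ and hence assemble into a factor $\psi(\X^{g})$, independently of everything to the right of $s_{k+1}$.

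The key step is then a bijection that re-expresses the ``right part'' as a $\varphi_{k+1}$-term. After deleting the $g$ gap positions and relabelling order-preservingly, I would merge $\{1,\dots,k\}$ with $s_{k+1}$ so that the old position $s_{k+1}$ becomes the new position $k+1$; the block $B$ then contains the first $k+1$ positions of a sequence of length $(n+k)-g$, and, because no block straddles the gap, the non-crossing type and the outer/inner classification of every other block are preserved under this relabelling. Hence the right part is counted exactly by $\varphi_{k+1}(\X^{\,n+k-g})$, and summing over the admissible gap lengths $g=0,1,\dots,n-1$ (equivalently $j=g+1$ running from $1$ to $n$) yields $\sum_{j=1}^{n}\psi(\X^{j-1})\varphi_{k+1}(\X^{\,n+k-j+1})$. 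Adding the terminal term from the previous paragraph gives \eqref{eq:RownanieKombinatoryczne}.

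The main obstacle I anticipate is making this bijection fully rigorous: one must verify that each admissible partition factorizes uniquely as (a non-crossing partition of the $g$ gap positions) $\times$ (a non-crossing partition of the relabelled remainder with the first $k+1$ positions joined), with no block crossing $s_{k+1}$, and that this factorization is weight-preserving, so that the gap partitions sum to $\psi(\X^{g})$ and the remainder sums to $\varphi_{k+1}$ by definition. Keeping track of the summation ranges --- in particular that $g$ cannot exceed $n-1$, since $\varphi_{k+1}$ requires at least $k+1$ positions --- together with the convention $\psi(\X^0)=1$, is the only other point requiring care.
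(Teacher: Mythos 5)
Your proof is correct and follows essentially the same route as the paper's own argument: you split off the case where the block is exactly $\{1,\dots,k\}$ (giving $R_k(\X)\varphi(\X^{n-k})$), and otherwise condition on the first element of the block beyond position $k$, factoring the intermediate gap into a $\psi$-moment and the relabelled remainder into a $\varphi_{k+1}$-term, which is precisely the paper's decomposition via $s(\nu)=\min\{j:j>k,\ j\in B_1\}$. Your explicit attention to the weight-preserving bijection and the range $g\le n-1$ only makes rigorous what the paper treats informally with its figure.
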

\begin{proof} 
First, we will consider partitions with  the first $k$ elements in the same block, i.e. we sum only for $j=k$ in the equation \eqref{eq:Kstalych} which corresponds to 
$R_k(\X)\varphi(\X^{n-k}) $.  

On the other hand, for $j>k$ 
 denote $s(\nu)=min\{j:j>k,j\in B_1\}$ where $ B_1$ is the block 
which contains $1,\dots,k$ ( in Figure \ref{fig:FiguraExemple1} it is an element $j$). 
 This decomposes our situation into the $n$ classes which can be identified with the product $\psi(\X^{j-k-1})\times \varphi_{k+1}(\X^{n+2k-j+1})$ where $ j\in\{k+1, \dots , n+k\}$. 
Indeed, the blocks in which partitions are the elements $\{{k+1}, \dots ,{j-1}\}$ can be  identified with $\psi(\X^{j-k-1})$ (in Figure \ref{fig:FiguraExemple1} these are stars), and under the additional constraint that the first $k+1$ elements are in the same block,  the remaining blocks, which are partitions of the
set $\{1,\dots,k,j,{j+1}, . . . , {n+k}\}$, can be uniquely identified with  $\varphi_{k+1}(\X^{n+2k-j+1})$ (in Figure \ref{fig:FiguraExemple1} it is a part without the stars). This gives the formula \eqref{eq:RownanieKombinatoryczne} (if we re-index $j$) and proves the lemma.

\vspace{1 cm}
\begin{figure}[h]
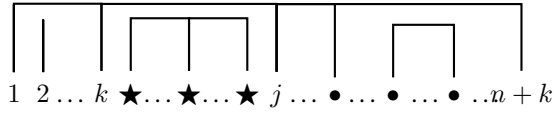

\begin{center}
\vspace{1cm}
\psset{nodesep=5pt}
\psset{angle=90}
\hspace{1mm}
\rnode{A}{\Node{$1$}} \rnode{A1}{\Node{$2$}}   \rnode{B}{\Node{\ldots}} \rnode{C1}{\Node{$k$}}
\rnode{D1}{\Node{$\bigstar$}} \rnode{D5}{\Node{\ldots}} \rnode{D2}{\Node{$\bigstar$}} \rnode{D5}{\Node{\ldots}}  \rnode{D4}{\Node{$\bigstar$}}   \rnode{E}{\Node{${j}$}} \rnode{E1}{\Node{\ldots}}  \rnode{E6}{\Node{$\bullet$}}  \rnode{E5}{\Node{\ldots}}  \rnode{E2}{\Node{$\bullet$}} \rnode{E3}{\ldots}\rnode{E7}{\Node{$\bullet$}}  \rnode{E3}{\ldots}  \rnode{F}{\Node{\textrm{ ${n+k}$ }}} 
\ncbar[armA=.7]{A1}{A1}
\ncbar[armA=.91]{A}{C1}
\ncbar[armA=.9]{A}{A}
\ncbar[armA=.89]{C1}{E}
\ncbar[armA=.9]{E}{F}
\ncbar[armA=.9]{E}{E6}
\ncbar[armA=.7]{D1}{D2}
\ncbar[armA=.5]{D3}{D3}
\ncbar[armA=.7]{D2}{D4}
\ncbar[armA=.7]{E2}{E7}
\caption{The main structure of non-crossing partitions of $\{1, 2,3 ,\dots,n+k\}$ with the first $k$ elements in the same block.}
\label{fig:FiguraExemple1}
\end{center}
\end{figure}
\end{proof}
\begin{defi}
Let $\mathbb{X}$ be a (self-adjoint) element of the algebra $\mathcal{A}$.  We introduce functions (series):
\begin{align} C^{(k)}_{\varphi,\psi}(z)=\sum_{n=0}^\infty \varphi_k(\X^{k+n}) z^{k+n}, \textrm{ where } k\geq 1 \end{align}
for sufficiently small $|z|$ and $z \in \mathbb{C}$. 
This series is convergent because we consider such a series as $M_{\nu_{\mathbb{X}}}(z)_\mathbb{X}$ and $M_{\mu_{\mathbb{X}}}(z)$ is convergent for sufficiently small $|z|$.   Thus from Lemma \ref{lem:1} we get that $C_{\varphi,\psi}^{(2)}$(z) is convergent because $C_{\varphi,\psi}^{(1)}(z)=M_{\mu_{\mathbb{X}}}(z)-1$. For $k>2$ this is immediate, by induction on $k$ and by using the
preceding lemma.
\end{defi}
\begin{lemm}
Let $\mathbb{X}$ be a (self-adjoint) element of the algebra $\mathcal{A}$,   then 
\begin{align}
C^{(k)}_{\varphi,\psi}(z)=M_\nu(z)C_{\varphi,\psi}^{(k+1)}(z)+R_k(\X)z^kM_\mu(z).
\end{align} 
\label{lem:2} 
 \end{lemm}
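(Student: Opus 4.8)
The plan is to derive the identity in Lemma~\ref{lem:2} directly from the combinatorial recursion in Lemma~\ref{lem:1} by multiplying through by $z^{n+k}$ and summing over the appropriate range of $n$. First I would recall the definition
\begin{align}
C^{(k)}_{\varphi,\psi}(z)=\sum_{n=0}^\infty \varphi_k(\X^{k+n}) z^{k+n}, \nonumber
\end{align}
and observe that the $n=0$ term is $\varphi_k(\X^k)z^k = R_k(\X)z^k$, since a partition of $\{1,\dots,k\}$ with all of $\{1,\dots,k\}$ in one block is forced to be the single block contributing $R_k(\X)$. The genuinely recursive content sits in the terms $n\ge 1$, which is exactly the range covered by Lemma~\ref{lem:1}.

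Next I would split $C^{(k)}_{\varphi,\psi}(z)$ as the $n=0$ term plus $\sum_{n=1}^\infty \varphi_k(\X^{k+n})z^{k+n}$, substitute the recursion \eqref{eq:RownanieKombinatoryczne} into the second piece, and interchange the two summations. The recursion replaces $\varphi_k(\X^{n+k})$ by $\sum_{j=1}^{n}\psi(\X^{j-1})\varphi_{k+1}(\X^{n+k-j+1}) + R_k(\X)\varphi(\X^{n-k})$. The first of these, after multiplying by $z^{n+k}$ and summing, is a Cauchy product: writing $z^{n+k}=z^{j-1}\cdot z^{(n+k-j+1)}$ and letting $m=n-j$ range over $m\ge 0$ while $j\ge 1$ varies, the double sum factorizes into $\big(\sum_{j\ge1}\psi(\X^{j-1})z^{j-1}\big)\big(\sum_{m\ge 0}\varphi_{k+1}(\X^{k+1+m})z^{k+1+m}\big)$, which is precisely $M_\nu(z)\,C^{(k+1)}_{\varphi,\psi}(z)$, using $\psi(\X^0)=1$ and the definition of $M_\nu$.

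For the remaining contribution I would collect the $R_k(\X)\varphi(\X^{n-k})$ terms from $n\ge 1$ together with the isolated $n=0$ term $R_k(\X)z^k$. Multiplying by $z^{n+k}$ and reindexing the moment exponent, these assemble into $R_k(\X)z^k\sum_{i\ge 0}\varphi(\X^i)z^i = R_k(\X)z^k M_\mu(z)$, matching the second term on the right-hand side of the claimed identity. I expect the main obstacle to be purely bookkeeping: tracking the index shifts carefully so that the Cauchy product lines up correctly and that the leftover $R_k$-terms (including the boundary $n=0$ case, which Lemma~\ref{lem:1} does not itself cover) combine into the full series $M_\mu(z)$ rather than a truncation. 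Once the two reindexings are verified, the convergence is guaranteed by the preceding definition, and the identity $C^{(k)}_{\varphi,\psi}(z)=M_\nu(z)C^{(k+1)}_{\varphi,\psi}(z)+R_k(\X)z^kM_\mu(z)$ follows.
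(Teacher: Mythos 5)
Your proposal is correct and follows essentially the same route as the paper's own proof: split off the $n=0$ term $\varphi_k(\X^k)z^k=R_k(\X)z^k$, substitute the recursion of Lemma \ref{lem:1} for $n\ge 1$, factor the $\psi$-part as the Cauchy product $M_\nu(z)C^{(k+1)}_{\varphi,\psi}(z)$, and absorb the boundary term into $R_k(\X)z^kM_\mu(z)$. One small remark: you quote the last term of Lemma \ref{lem:1} as $R_k(\X)\varphi(\X^{n-k})$, reproducing a typo in the paper's statement, but --- exactly as the paper's own computation does --- you then handle it as $R_k(\X)\varphi(\X^{n})$, which is the version that makes the leftover terms combine with the $n=0$ case into the full series $M_\mu(z)$.
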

\begin{proof} It is clear from Lemma \ref{lem:1}  that we have
\begin{align} C^{(k)}_{\varphi,\psi}(z)&=\sum_{n=0}^\infty \varphi_k(\mathbb{X}^{k+n})z^{k+n}=\varphi_k(\mathbb{X}^{k})z^{k}+\sum_{n=1}^\infty \varphi_k(\mathbb{X}^{n+k})z^{k+n}\nonumber \\ &= \varphi_k(\mathbb{X}^{k})z^{k}+\sum_{n=1}^\infty [\sum_{i=0}^{n-1}\psi(\X^{i})\varphi_{k+1}(\mathbb{X}^{n+k-i})  +R_k(\X)\varphi(\mathbb{X}^{n})]z^{k+n}
 \nonumber \\
&= \varphi_k(\mathbb{X}^{k})z^{k}+\sum_{n=1}^\infty \sum_{i=0}^{n-1}\psi(\X^{i})z^{i}\varphi_{k+1}(\mathbb{X}^{n+k-i})z^{k+n-i} +R_k(\X)z^{k}\sum_{n=1}^\infty \varphi(\mathbb{X}^{n})z^{n}
 \nonumber \\
&= \sum_{n=1}^\infty \sum_{i=0}^{n-1}\psi(\X^{i})z^{i}\varphi_{k+1}(\mathbb{X}^{n+k-i})z^{k+n-i} +R_k(\X)z^{k}\sum_{n=0}^\infty \varphi(\mathbb{X}^{n})z^{n}
\nonumber \\ &=M_\nu(z)C^{(k+1)}_{\varphi,\psi}(z)+R_k(\X)z^kM_\mu(z),
 \end{align}
which proves the lemma. 
\end{proof}
\begin{exam}  \noindent For $k=1$, we get
\begin{align}
 C^{(1)}_{\varphi,\psi}(z)=M_\mu(z)-1=M_\nu(z)C^{(2)}_{\varphi,\psi}(z)+R_1(\X)zM_\mu(z). \label{eq:exemK=1} 
 \end{align}\\
\noindent  Similarly, by putting $k=2$, we obtain
\begin{align} 
C^{(2)}_{\varphi,\psi}(z)=M_\nu(z)C^{(3)}_{\varphi,\psi}(z)+R_2(\X)z^2M_\mu(z). \label{eq:exemK=2} 
 \end{align}\end{exam}
\begin{defi}
\noindent We introduce the notation  
 \begin{align} 
\varphi_{\shortparallel}(\X^n)= \sum_{k=1}^n  \sum_{s_{1}=1<s_2<\dots< s_k= n} R_k(\X^k)\prod_{r=1}^{k-1} \psi(\X^{s_{r+1}-1-s_r}),  \label{eq:PierwszyiOstatni}
\end{align}
where $n\geq 2$. The above equation corresponds to ``moments'' of $\varphi$ with the first and last element in the same block. 
\end{defi}
\begin{exam}  \noindent For  $n=5$, we get
\begin{align}
\varphi_{\shortparallel}(\X^5)=R_2(\X)\psi(\X^3)+2R_3(\X)\psi(\X^2)+R_3(\X)\psi^2(\X)+3R_4(\X)\psi(\X)+R_5(\X).
 \end{align}
\end{exam}
\begin{lemm}
 Let $\X$ be the self-adjoint element of the algebra $\mathcal{A}$, then 
 \begin{align} 
\varphi(\X^{n})= \sum_{j=2}^{n} \varphi_{\shortparallel}(\X^j) \varphi(\X^{n-j}) +R_1(\X) \varphi(\X^{n-1}),\label{eq:RownanieKombinatorycznePierwszyOstatni}
\end{align} 
where $n\geq 1$.
\label{lem:8} 
 \end{lemm}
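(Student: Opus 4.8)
The plan is to prove \eqref{eq:RownanieKombinatorycznePierwszyOstatni} by a direct decomposition of $NC(n)$, organized according to the block that contains the first point $1$. This mirrors the combinatorial argument behind Lemma \ref{lem:1}, except that the bookkeeping is now driven by the \emph{largest} element of the block of $1$ rather than by an initial segment of fixed length. First I would start from the two-state moment--cumulant expansion \eqref{eq:DefinicjaKumulant2}, writing $\varphi(\X^{n})=\sum_{\nu\in NC(n)}\prod_{B\in Out(\nu)}R_{|B|}(\X)\prod_{B\in Inn(\nu)}r_{|B|}(\X)$. For each $\nu$ let $B_1$ be the block containing $1$ and set $j=\max B_1$, and split the total sum according to the value of $j\in\{1,\dots,n\}$.

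The geometric heart of the argument is to show that the non-crossing condition forces a clean factorization of $\nu$ across the cut at $j$. Every block of $\nu$ other than $B_1$ must lie entirely inside $\{2,\dots,j-1\}$ or entirely inside $\{j+1,\dots,n\}$, since a block meeting both parts would contain points $p,q$ with $1<p<j<q$ while $1,j\in B_1$, producing a crossing. I would then verify the two outer/inner bookkeeping facts on which everything rests: (i) every block inside $\{2,\dots,j-1\}$ is nested within $B_1$, hence inner, contributing a free cumulant $r$; and (ii) the outer/inner status of each block inside $\{j+1,\dots,n\}$ coincides with its status in the restricted partition on $\{j+1,\dots,n\}$, because neither $B_1$ (whose maximum is $j$) nor any block inside $\{2,\dots,j-1\}$ can enclose a point $>j$.

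Reading off the three factors from this decomposition then completes the proof. The block $B_1$ is the unique outer block of the restriction to $\{1,\dots,j\}$; summing the inner free cumulants over all non-crossing partitions of the gaps between the consecutive elements $s_1=1<\dots<s_k=j$ of $B_1$ collapses each gap of size $s_{r+1}-1-s_r$ to the moment $\psi(\X^{s_{r+1}-1-s_r})$ by the free moment--cumulant relation \eqref{eq:DefinicjaKumulant}. This reproduces exactly the summand in the definition \eqref{eq:PierwszyiOstatni}, so the $\{1,\dots,j\}$-part contributes $\varphi_{\shortparallel}(\X^{j})$ for $j\geq 2$, while the $\{j+1,\dots,n\}$-part contributes $\varphi(\X^{n-j})$ by (ii) and \eqref{eq:DefinicjaKumulant2}; summing over $j=2,\dots,n$ yields $\sum_{j=2}^{n}\varphi_{\shortparallel}(\X^{j})\varphi(\X^{n-j})$. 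The remaining case $j=1$, i.e. $B_1=\{1\}$ a singleton, has $\{1\}$ as an outer block of size one contributing $R_1(\X)$ and an arbitrary non-crossing partition of $\{2,\dots,n\}$ contributing $\varphi(\X^{n-1})$, giving the term $R_1(\X)\varphi(\X^{n-1})$ and hence \eqref{eq:RownanieKombinatorycznePierwszyOstatni}.

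The main obstacle I anticipate is the careful verification of the outer/inner classification after the cut, rather than any computation. Two points need attention: ruling out a block that straddles the cut at $j$ (handled by the crossing argument above), and justifying that the inner free cumulants inside the gaps of $B_1$ collapse to $\psi$-moments rather than $\varphi$-moments. The latter is precisely the single-state moment--cumulant formula \eqref{eq:DefinicjaKumulant} applied gap by gap, and it is exactly the feature that the definition of $\varphi_{\shortparallel}$ encodes; one must simply keep track of the fact that the $\psi$-law governs all blocks nested inside the outer block $B_1$.
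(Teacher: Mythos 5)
Your proposal is correct and takes essentially the same route as the paper's proof: both decompose $NC(n)$ according to the rightmost element $j$ of the block containing $1$, treat the singleton case $B_1=\{1\}$ separately as $R_1(\X)\varphi(\X^{n-1})$, and identify the $\{1,\dots,j\}$ part with $\varphi_{\shortparallel}(\X^{j})$ and the $\{j+1,\dots,n\}$ part with $\varphi(\X^{n-j})$. The only difference is presentational: you work from \eqref{eq:DefinicjaKumulant2} and verify the inner/outer bookkeeping explicitly, whereas the paper argues more tersely from \eqref{eq:DefinicjaKumulant1}, where that bookkeeping is built into the formula.
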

\begin{proof} 
The proof is based on the analysis of the formula \eqref{eq:DefinicjaKumulant1}.
First, we will consider partitions with singleton 1, i.e.  $\pi= \{V_1, \dots,V_k\}$ where $V_1=\{1\}$. It is clear that the  sum over all  non-crossing partitions of this form corresponds to the term $R_1(\X) \varphi(\X^{n-1})$.

On the other hand, for such partitions   as in notation \eqref{eq:DefinicjaKumulant1}  let $s = s(\nu) \in \{2, 3, \dots , n\}$  denote the most-right element  of the block containing $1$.  This decomposes our situation into the $n-1$ classes $ s(\nu) = j, \textrm{ } j \in\{ 2, \dots , n\}$. This set can be identified with the product $\varphi_{\shortparallel}(\X^{j}) \varphi(\X^{n-j})$. 
Indeed, the blocks which partition the elements $\{ {j+1}, \dots ,{n} \}$  can be  identified with $\varphi(\X^{n-j})$ (on Figure \ref{fig:FiguraExemple2} it is the part on the right side of the element $j$), and under the additional constraint that is the first one (i.e. 1) and last element (i.e. $j$)  are in the same
block, the remaining blocks, which partition the set $\{1, \dots ,j\}$, can be
uniquely identified with $\varphi_{\shortparallel}(\X^{j})$, it follows from the definition of $\varphi_{\shortparallel}$, i.e. equation \eqref{eq:PierwszyiOstatni}   (in Figure \ref{fig:FiguraExemple2} it is the block  which contains 1 and $j$). This yields the formula \eqref{eq:RownanieKombinatorycznePierwszyOstatni} and proves the lemma. 
%
 
\begin{figure}[h]
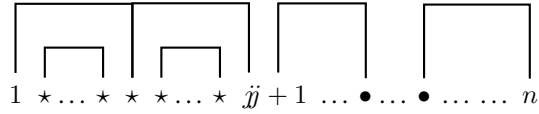

\begin{center}
\vspace{1cm}
\psset{nodesep=5pt}
\psset{angle=90}
\hspace{0.2mm}
\rnode{A}{\Node{$1$}} \rnode{A1}{\Node{$\star$}} \rnode{A1'}{\Node{\ldots}} \rnode{A1''}{\Node{$\star$}}  \rnode{A2}{\Node{$\star$}} \rnode{B}{\Node{$\star$}} \rnode{B1'}{\Node{\ldots}} \rnode{B1''}{\Node{$\star$}} \rnode{C1}{\Node{${j}$}}  \rnode{C1'}{\Node{${j+1}$}} \rnode{E}{\Node{ }}
   \rnode{E1}{\Node{\ldots}}  \rnode{E6}{\Node{$\bullet$}}  \rnode{E5}{\Node{\ldots}}  \rnode{E2}{\Node{$\bullet$}} \rnode{E3}{\ldots} \rnode{E3}{\ldots}  \rnode{F}{\Node{\textrm{ ${n}$ }}} 
\ncbar[armA=.92]{A}{A2}
\ncbar[armA=.4]{A1}{A1''}
\ncbar[armA=.4]{B}{B1''}
\ncbar[armA=.4]{B''''}{B''''}
\ncbar[armA=0.99]{A2}{C1}
\ncbar[armA=0.92]{C1'}{E6}
\ncbar[armA=1.5]{C}{C}
\ncbar[armA=.7]{D2}{D4}
\ncbar[armA=.5]{E7}{E7}
\ncbar[armA=.98]{E2}{F}
\caption{The main structure of the non-crossing partitions of $\{1, 2,3 ,\dots,j,\dots,n\}$ with the first one and the $j$-th element in the same block.}
\label{fig:FiguraExemple2}
\end{center}
\end{figure}
\end{proof}
\begin{defi}
Let $\mathbb{X}$ be a (self-adjoint) element of the algebra $\mathcal{A}$.  We introduce the following functions (series):
\begin{align} C_{{\shortparallel}}(z)= C_{{\shortparallel},\varphi,\psi}(z)=\sum_{n=0}^\infty \varphi_{\shortparallel}(\X^{2+n}) z^{2+n}, \end{align} 
for sufficiently small $z$ and $z \in \mathbb{C}$. This series is convergent by a similar argument as for $C^{(k)}_{\varphi,\psi}(z)$ (using Lemma \ref{lem:8}).  
\end{defi}

\begin{lemm}
Let $\mathbb{X}$ be a (self-adjoint) element of the algebra $\mathcal{A}$,   then 
\begin{align}
M_\mu(z)-1=M_\mu(z)C_{\shortparallel}(z)+R_1(\X)zM_\mu(z). \label{eq:TransformatyKombinatorycznePierwszyOstatni}
\end{align} 
\label{lem:4} 
 \end{lemm}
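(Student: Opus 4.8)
The plan is to mirror the proof of Lemma \ref{lem:2}, converting the purely combinatorial recursion of Lemma \ref{lem:8} into an identity between the generating functions by multiplying through by $z^n$ and summing. First I would write
\begin{align}
M_\mu(z)-1=\sum_{n=1}^\infty \varphi(\X^n)z^n \nonumber
\end{align}
and substitute the recursion \eqref{eq:RownanieKombinatorycznePierwszyOstatni} termwise for each $n\geq 1$. Here it is worth checking the boundary case $n=1$: the inner sum $\sum_{j=2}^{1}$ is empty, so \eqref{eq:RownanieKombinatorycznePierwszyOstatni} degenerates to $\varphi(\X)=R_1(\X)\varphi(\X^0)=R_1(\X)$, which is exactly the known relation $R_1(\X)=\varphi(\X)$; hence the substitution is legitimate for every $n\geq 1$ with no separate treatment needed.

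Next I would split the resulting double sum into the two pieces coming from the two summands of \eqref{eq:RownanieKombinatorycznePierwszyOstatni}. The term carrying $R_1(\X)$ is handled by a single index shift:
\begin{align}
\sum_{n=1}^\infty R_1(\X)\varphi(\X^{n-1})z^n = R_1(\X)z\sum_{m=0}^\infty \varphi(\X^m)z^m = R_1(\X)zM_\mu(z). \nonumber
\end{align}
The remaining piece is the genuinely structural one,
\begin{align}
\sum_{n=2}^\infty\Bigl(\sum_{j=2}^n \varphi_{\shortparallel}(\X^j)\varphi(\X^{n-j})\Bigr)z^n, \nonumber
\end{align}
which I would factor as $\bigl(z^j\varphi_{\shortparallel}(\X^j)\bigr)\bigl(z^{n-j}\varphi(\X^{n-j})\bigr)$ and, setting $m=n-j$, recognize as the Cauchy product of $C_{\shortparallel}(z)=\sum_{j\geq 2}\varphi_{\shortparallel}(\X^j)z^j$ with $M_\mu(z)=\sum_{m\geq 0}\varphi(\X^m)z^m$, so that this piece equals $C_{\shortparallel}(z)M_\mu(z)$. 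Assembling the two contributions yields $M_\mu(z)-1=C_{\shortparallel}(z)M_\mu(z)+R_1(\X)zM_\mu(z)$, which is the claimed identity since the scalar power series commute.

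The only delicate point, and the step I would treat most carefully, is the reindexing that turns the triangular double sum $\sum_{n}\sum_{j=2}^n$ into the product of the two full series. This rearrangement is exactly a Cauchy product, so it is valid precisely because both $C_{\shortparallel}(z)$ and $M_\mu(z)$ converge absolutely for sufficiently small $|z|$; that convergence has already been established (for $M_\mu$ from the compact-support assumption, and for $C_{\shortparallel}$ in the definition preceding this lemma, by the same argument as for $C^{(k)}_{\varphi,\psi}$). Everything else is bookkeeping of indices, so I expect no substantial obstacle beyond getting the summation ranges right in the $n=1$ and $n=2$ corners.
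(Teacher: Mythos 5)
Your proof is correct and takes essentially the same approach as the paper: the paper's proof of this lemma simply states that it is analogous to the proof of Lemma \ref{lem:2}, based on the recursion \eqref{eq:RownanieKombinatorycznePierwszyOstatni}, which is precisely what you carry out (multiply by $z^n$, sum over $n\geq 1$, shift the index in the $R_1(\X)$ term, and recognize the triangular sum as the Cauchy product $C_{\shortparallel}(z)M_\mu(z)$). Your explicit checks of the $n=1$ boundary case and of the convergence needed for the Cauchy product are sound refinements of the same argument.
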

\begin{proof}  The proof is analogous to the proof of  Lemma \ref{lem:2} and based on the equation \eqref{eq:RownanieKombinatorycznePierwszyOstatni}. 
\end{proof}
\begin{cor} We have the following identity
\begin{align} 
M_\nu(z)C^{(2)}_{\varphi,\psi}(z)=M_\mu(z)C_{\shortparallel}(z).
\end{align} 
\end{cor}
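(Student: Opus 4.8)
The plan is to obtain both sides of the claimed identity from the two transform relations already in hand, so that the corollary drops out by simply equating common right-hand sides. The two ingredients are the $k=1$ instance of Lemma~\ref{lem:2}, recorded in \eqref{eq:exemK=1}, and the identity \eqref{eq:TransformatyKombinatorycznePierwszyOstatni} of Lemma~\ref{lem:4}. Both of these are equalities of power series that converge for sufficiently small $|z|$ (guaranteed by the convergence discussion accompanying the definitions of $C^{(k)}_{\varphi,\psi}$ and $C_{\shortparallel}$), so all manipulations below are legitimate rearrangements of convergent series rather than merely formal ones.

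First I would read off from \eqref{eq:exemK=1} the chain $M_\mu(z)-1 = M_\nu(z)C^{(2)}_{\varphi,\psi}(z)+R_1(\X)zM_\mu(z)$ and isolate the term of interest:
\[
M_\nu(z)C^{(2)}_{\varphi,\psi}(z) = M_\mu(z)-1-R_1(\X)zM_\mu(z).
\]
Next I would treat \eqref{eq:TransformatyKombinatorycznePierwszyOstatni} in exactly the same way, isolating $M_\mu(z)C_{\shortparallel}(z)$:
\[
M_\mu(z)C_{\shortparallel}(z) = M_\mu(z)-1-R_1(\X)zM_\mu(z).
\]
The two right-hand sides are literally identical, so comparing the two displays gives $M_\nu(z)C^{(2)}_{\varphi,\psi}(z)=M_\mu(z)C_{\shortparallel}(z)$, which is the assertion of the corollary.

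There is essentially no obstacle here: the content of the corollary is carried entirely by Lemmas~\ref{lem:2} and~\ref{lem:4}, and the proof is a one-line elimination of the common summand $M_\mu(z)-1-R_1(\X)zM_\mu(z)$. If one wanted to emphasize the conceptual meaning rather than the bookkeeping, I would remark that the identity equates two different generating functions for non-crossing partitions of $\{1,\dots,n\}$ — those with the first \emph{two} elements in one block (dressed by $M_\nu$) and those with the first and \emph{last} element in one block (dressed by $M_\mu$) — but this observation is a gloss on the computation, not an additional step needed for the proof.
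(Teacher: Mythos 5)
Your proposal is correct and matches the paper's own proof, which likewise derives the identity by comparing \eqref{eq:exemK=1} with \eqref{eq:TransformatyKombinatorycznePierwszyOstatni}; eliminating the common quantity $M_\mu(z)-1-R_1(\X)zM_\mu(z)$ is exactly what the paper's one-line "compare the equations" instruction amounts to. Nothing is missing.
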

\begin{proof}  Compare the equation \eqref{eq:exemK=1}  with the equation \eqref{eq:TransformatyKombinatorycznePierwszyOstatni}.
\end{proof}

\begin{lemm}
Suppose that $\X$, $\Y$ are self-adjoint, c-free, $\beta R_k(\X)= \alpha R_k(\Y) \textrm{ for some }\alpha,\beta>0$ and all integer  $k\geq 1$. Then 
\begin{align} R_{k}(\beta\mathbb{X}-\alpha\mathbb{Y},\mathbb{X+Y},\mathbb{X+Y},\dots,\mathbb{X+Y},\mathbb{X+Y})=0,\label{eq:zalozeniekumulantyRozdzial4}
\end{align}
\begin{align} R_{k}(\beta\mathbb{X}-\alpha\mathbb{Y},\beta\mathbb{X}-\alpha\mathbb{Y},\mathbb{X+Y},\dots,\mathbb{X+Y},\mathbb{X+Y})=\alpha\beta R_{k}(\X+\Y),\label{eq:zalozeniekumulanty2Rozdzial4}
\end{align}
\begin{align} R_{k}(\beta\mathbb{X}-\alpha\mathbb{Y},\mathbb{X+Y},\mathbb{X+Y},\dots,\mathbb{X+Y},\beta\mathbb{X}-\alpha\mathbb{Y})=\alpha\beta R_{k}(\X+\Y).\label{eq:zalozeniekumulanty3Rozdzial4}
\end{align}
\label{lem:3}
\end{lemm}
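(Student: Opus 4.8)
The plan is to reduce all three identities to the vanishing of mixed $c$-free cumulants, using only multilinearity of $R_k$ and the defining property of $c$-freeness. Write $U=\beta\X-\alpha\Y$ and $V=\X+\Y$. Since $\X$ and $\Y$ sit in different subalgebras of a $c$-free family, the definition of $c$-freeness gives $R_k(\Z_1,\dots,\Z_k)=0$ whenever the arguments $\Z_j\in\{\X,\Y\}$ are not all equal. Consequently, expanding any $R_k$ whose entries are linear combinations of $\X$ and $\Y$ into its $2^k$ multilinear terms, only the all-$\X$ term $R_k(\X,\dots,\X)=R_k(\X)$ and the all-$\Y$ term $R_k(\Y,\dots,\Y)=R_k(\Y)$ survive.

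Applied to \eqref{eq:zalozeniekumulantyRozdzial4}, the all-$\X$ term carries coefficient $\beta\cdot 1^{k-1}=\beta$ and the all-$\Y$ term coefficient $(-\alpha)\cdot 1^{k-1}=-\alpha$, so $R_k(U,V,\dots,V)=\beta R_k(\X)-\alpha R_k(\Y)$, which vanishes directly by the hypothesis $\beta R_k(\X)=\alpha R_k(\Y)$; this case needs no additivity. For \eqref{eq:zalozeniekumulanty2Rozdzial4} and \eqref{eq:zalozeniekumulanty3Rozdzial4} the same expansion yields in both cases the all-$\X$ coefficient $\beta^2$ and the all-$\Y$ coefficient $(-\alpha)^2=\alpha^2$, since the two $U$-slots each contribute $\beta$ or $-\alpha$ and the $V$-slots contribute $1$, regardless of whether the second $U$ sits in position $2$ or in position $k$. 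Hence both left-hand sides equal $\beta^2 R_k(\X)+\alpha^2 R_k(\Y)$. Invoking $\beta R_k(\X)=\alpha R_k(\Y)$ twice gives $\beta^2 R_k(\X)=\alpha\beta R_k(\Y)$ and $\alpha^2 R_k(\Y)=\alpha\beta R_k(\X)$, so the sum collapses to $\alpha\beta\big(R_k(\X)+R_k(\Y)\big)$.

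It then remains to identify $R_k(\X)+R_k(\Y)$ with $R_k(\X+\Y)$. This is again an instance of the same mechanism: expanding $R_k(V,\dots,V)$ by multilinearity and discarding the mixed terms by $c$-freeness leaves exactly $R_k(\X)+R_k(\Y)$, which is the additivity of two-state free cumulants on $c$-free elements encoded by the $c$-free convolution rule $R_n^{(\mu_3,\nu_3)}=R_n^{(\mu_1,\nu_1)}+R_n^{(\mu_2,\nu_2)}$. Substituting gives the asserted value $\alpha\beta R_k(\X+\Y)$ in both \eqref{eq:zalozeniekumulanty2Rozdzial4} and \eqref{eq:zalozeniekumulanty3Rozdzial4}.

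The computation is essentially routine once the reductions above are set up; the only genuine point of care is the bookkeeping of the coefficients in the multilinear expansion together with the verification that the mixed cumulants truly vanish because the elements are $c$-free (so that both $r_k$ and $R_k$ mixed cumulants are zero), and not merely $(\varphi,\psi)$-free. I expect this vanishing step to be the crux, since it is what makes the two-state cumulant behave additively here and is precisely the hypothesis exploited throughout.
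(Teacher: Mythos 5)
Your proof is correct and takes essentially the same route as the paper's: multilinear expansion of $R_k$, vanishing of the mixed cumulants by $c$-freeness, the hypothesis $\beta R_k(\X)=\alpha R_k(\Y)$ applied to the surviving coefficients $\beta^2$ and $\alpha^2$, and the additivity $R_k(\X)+R_k(\Y)=R_k(\X+\Y)$. The paper's argument is just a compressed version of yours, stating $\beta R_k(\X)-\alpha R_k(\Y)=0$ and $\beta^2 R_k(\X)+\alpha^2 R_k(\Y)=\alpha\beta R_k(\X+\Y)$ without spelling out the bookkeeping you make explicit.
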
 
\begin{proof} By taking into account the fact that $R_k$ are multilinear functions and  from the assumption of  $c$-free and $\beta R_k(\X)= \alpha R_k(\Y)$ we get

\begin{align} R_{k}(\beta\mathbb{X}-\alpha\mathbb{Y},\mathbb{X+Y},\mathbb{X+Y},\dots,\mathbb{X+Y})=\beta R_{k}(\X)-\alpha R_{k}(\Y)=0,  
\end{align}
and similarly for $k \geqslant 2$ 
\begin{align} 
&R_{k}(\beta\mathbb{X}-\alpha\mathbb{Y},\beta\mathbb{X}-\alpha\mathbb{Y},\mathbb{X+Y},\dots,\mathbb{X+Y})\nonumber\\&=\beta^2 R_{k}(\X)+\alpha^2 R_{k}(\Y) =\beta \alpha R_{k}(\X+\Y), 
\end{align}
\begin{align} &R_{k}(\beta\mathbb{X}-\alpha\mathbb{Y},\mathbb{X+Y},\dots,\mathbb{X+Y},\beta\mathbb{X}-\alpha\mathbb{Y})\nonumber\\&=\beta^2 R_{k}(\X)+\alpha^2 R_{k}(\Y) =\beta \alpha R_{k}(\X+\Y).
\end{align}
and the assertion
follows.
\end{proof}

\begin{lemm}
Suppose that $\X$, $\Y$ are self-adjoint, c-free, $\beta R_k(\X)= \alpha R_k(\Y) \textrm{ for some }\alpha,\beta>0$ and all integer  $k\geq 1$. Then 
\begin{enumerate}
\item $\varphi((\beta\mathbb{X}-\alpha\mathbb{Y})^2(\mathbb{X+Y})^n) = \alpha\beta \varphi_2((\mathbb{X+Y})^{n+2})$,
\item $\varphi\big((\beta\mathbb{X}-\alpha\mathbb{Y})(\mathbb{X+Y})^n(\beta\mathbb{X}-\alpha\mathbb{Y})\big) = \alpha\beta \varphi_{\shortparallel}((\mathbb{X+Y})^{n+2})$.
\end{enumerate}
\label{lemm:wlasnosciFunkcjiFi}
\end{lemm}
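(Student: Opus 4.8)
The plan is to expand each of the two moments with the two-state cumulant formula \eqref{eq:DefinicjaKumulant2} and then organise the non-crossing partitions according to where the two distinguished factors $\beta\X-\alpha\Y$ sit, applying Lemma \ref{lem:3} block by block. Write $U=\beta\X-\alpha\Y$ and $\S=\X+\Y$. Then the left-hand side of $(1)$ is $\varphi(U\,U\,\S\cdots\S)$ with $U$ in positions $1,2$ and $\S$ in positions $3,\dots,n+2$, and the left-hand side of $(2)$ is $\varphi(U\,\S\cdots\S\,U)$ with $U$ in positions $1$ and $n+2$ and $\S$ in between.

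For $(1)$ I would split $NC(n+2)$ according to whether positions $1$ and $2$ lie in a common block. If they lie in different blocks, the block $B$ through position $1$ is outer (it contains the leftmost point) and carries a single $U$ as its first argument with $\S$ in all other slots, so its factor is $R_{|B|}(U,\S,\dots,\S)=0$ by \eqref{eq:zalozeniekumulantyRozdzial4}; every such partition therefore contributes nothing. If positions $1$ and $2$ lie in one (necessarily outer) block $B$, then $U$ occupies exactly the first two arguments of $R_{|B|}$, and \eqref{eq:zalozeniekumulanty2Rozdzial4} replaces the factor by $\alpha\beta\,R_{|B|}(\S)$, while every other block involves only $\S$ and is left unchanged. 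Pulling $\alpha\beta$ out leaves precisely the sum over non-crossing partitions of $\{1,\dots,n+2\}$ with $1,2$ in one block of $\prod_{B\in Out(\nu)}R_{|B|}(\S)\prod_{B\in Inn(\nu)}r_{|B|}(\S)$, which is $\varphi_2(\S^{n+2})$, the part of the moment $\varphi(\S^{n+2})$ coming from partitions with $1$ and $2$ in a common block, as recorded in the definition \eqref{eq:Kstalych}.

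Part $(2)$ is parallel, now separating the partitions by whether positions $1$ and $n+2$ share a block. If not, the outer block through position $1$ again contains a lone $U$ in its first slot and vanishes by \eqref{eq:zalozeniekumulantyRozdzial4}. If so, positions $1$ and $n+2$ are the extreme arguments of the outer block $B$, the rest being $\S$, so \eqref{eq:zalozeniekumulanty3Rozdzial4} turns $R_{|B|}(U,\S,\dots,\S,U)$ into $\alpha\beta\,R_{|B|}(\S)$. Since $B$ now contains both the first and the last point, all remaining positions fall into the gaps between consecutive elements of $B$, and the inner blocks filling those gaps resum to the $\psi$-moments occurring in \eqref{eq:PierwszyiOstatni}; the surviving sum is therefore $\alpha\beta\,\varphi_{\shortparallel}(\S^{n+2})$.

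The only genuinely delicate point is the bookkeeping that matches these partition sums to the definitions of $\varphi_2$ and $\varphi_{\shortparallel}$: one must verify that in the same-block case the distinguished factors really occupy the first (and, for $(2)$, the last) argument of the outer cumulant, so that Lemma \ref{lem:3} applies verbatim, and that the inner and gap structure of the $U$-moment coincides with that of the corresponding $\S$-moment. Because the two copies of $U$ sit at the extreme positions $1,2$ (respectively $1$ and $n+2$), both checks are immediate; note also that the hypothesis on $r_k$ is never needed, since every inner and gap factor involves $\S$ alone.
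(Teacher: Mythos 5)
Your proof is correct and follows essentially the same route as the paper's: both expand the mixed moment via the two-state moment--cumulant formula \eqref{eq:DefinicjaKumulant2}, split the non-crossing partitions according to whether the two distinguished factors $\beta\X-\alpha\Y$ lie in a common block, kill the split case with \eqref{eq:zalozeniekumulantyRozdzial4}, and extract $\alpha\beta$ from the joint block via \eqref{eq:zalozeniekumulanty2Rozdzial4} (resp.\ \eqref{eq:zalozeniekumulanty3Rozdzial4}), identifying what remains with $\varphi_2$ (resp.\ $\varphi_{\shortparallel}$). Your write-up merely makes explicit some bookkeeping the paper leaves implicit (outerness of the block through position $1$, and the resummation of gap blocks into the $\psi$-moments of \eqref{eq:PierwszyiOstatni}).
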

\begin{proof}
\noindent 1. Now we use the moment-cumulant formula (\ref{eq:DefinicjaKumulant}) and \eqref{eq:zalozeniekumulanty2Rozdzial4}
 \begin{align}
\varphi((\beta\mathbb{X}-\alpha\mathbb{Y})^2(\mathbb{X+Y})^n) =
\varphi_2((\beta\mathbb{X}-\alpha\mathbb{Y})(\beta\mathbb{X}-\alpha\mathbb{Y})(\mathbb{X+Y})^n) =
\alpha\beta \varphi_2((\mathbb{X+Y})^{n+2}), \label{eq:DwieSpojneKumulantyRozdzial4}
 \end{align}
because if the first element $\beta\mathbb{X}-\alpha\mathbb{Y}$ is in the partition with an element only from the ``part'' $(\mathbb{X+Y})^n$  then we have  \eqref{eq:zalozeniekumulantyRozdzial4} (the sum over this partition vanishes). Thus the first element and second one must be in the same block  and taking into account the equation  \eqref{eq:zalozeniekumulanty2Rozdzial4}, we get \eqref{eq:DwieSpojneKumulantyRozdzial4}.
\\
\\
\noindent 2. Now we show \begin{align} 
\varphi((\beta\mathbb{X}-\alpha\mathbb{Y})(\mathbb{X}+\mathbb{Y})^n(\beta\mathbb{X}-\alpha\mathbb{Y}))=\alpha\beta\varphi_{\shortparallel}((\mathbb{X}+\mathbb{Y})^{n+2}).
\label{eq:Pom2Propozycja1}
\end{align}
 We have that either the first and
the last elements are in different blocks, or they are in the same block. In the first case,
$\varphi((\beta\mathbb{X}-\alpha\mathbb{Y})(\mathbb{X}+\mathbb{Y})^n(\beta\mathbb{X}-\alpha\mathbb{Y}))=0$
 by the equation \eqref{eq:zalozeniekumulantyRozdzial4}. On the other hand, if they are in the
same block, then  from the Lemma \ref{lem:3}  we get \eqref{eq:Pom2Propozycja1}. 

\end{proof}

Now we present  a theorem which follows from the main result of \cite{AnMlotko}. It will be used in the proof of the main theorem in order to calculate the moment generating function of free convolution.
\begin{theo}
Let $(\mu,\nu)$ be a pair of measures with Jacobi parameters \eqref{eq:JacobiDefiNoramal1} and \eqref{eq:JacobiDefiNoramal2}, respectively. Then the conditionally free power $(\mu_t,\nu_t)=(\mu,\nu)^{\boxplus_ct}$ exists for $t\geq 0$ and we have 
\begin{align}
J(\mu_t)=\left(\begin{array}{c c c c c }
 a t, & 0, & 0,& 0, & \dots  \\ 
 b t, & t, & t,& t & \dots
\end{array}\right)\label{eq:JacobiDefiMeixner3}
\end{align}
and
\begin{align}
J(\nu_t)=\left(\begin{array}{c c c c c }
0, &0, & 0,& 0, & \dots  \\ 
t, & t, & t,& t & \dots
\end{array}\right).\label{eq:JacobiDefiMeixner4} 
\end{align}
\label{twr:1}
\end{theo}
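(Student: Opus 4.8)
The plan is to reduce everything to the transform identities already available in the paper and then read off a continued fraction. By the definition of the conditionally free convolution, the power $(\mu_t,\nu_t)=(\mu,\nu)^{\boxplus_c t}$ is the pair whose cumulants are scaled by $t$, i.e. $R_k^{(\mu_t,\nu_t)}=t\,R_k^{(\mu,\nu)}$ and $r_k^{\nu_t}=t\,r_k^{\nu}$ for every $k$. Since by Definition \ref{defi:DwustanowyNormalny} together with \eqref{eq:JacobiDefiNoramal1}--\eqref{eq:JacobiDefiNoramal2} the base pair satisfies $R_1(\X)=a$, $R_2(\X)=b$, $R_k(\X)=0\ (k\ge3)$ and $r_2=1$, $r_k=0\ (k\ne2)$, the power has
$$R_1^{(\mu_t,\nu_t)}=at,\quad R_2^{(\mu_t,\nu_t)}=bt,\quad R_k^{(\mu_t,\nu_t)}=0\ (k\ge3),\qquad r_2^{\nu_t}=t,\quad r_k^{\nu_t}=0\ (k\ne2).$$
Thus $(\mu_t,\nu_t)$ is again two-state normal, and it remains only to convert these cumulants into Jacobi parameters.

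The key step is a transform identity relating $G_\mu$, $G_\nu$ and the two-state cumulants $R_k$, which I would obtain by iterating Lemma \ref{lem:2}. Starting from $C^{(1)}_{\varphi,\psi}(z)=M_\mu(z)-1$ and substituting the recursion $C^{(k)}_{\varphi,\psi}=M_\nu C^{(k+1)}_{\varphi,\psi}+R_k(\X)z^kM_\mu$ into itself (the remainder $M_\nu(z)^{m}C^{(m+1)}_{\varphi,\psi}(z)$ has order $z^{m+1}$ and drops out in the limit) yields
$$M_\mu(z)-1=M_\mu(z)\sum_{k=1}^\infty R_k(\X)\,z^k M_\nu(z)^{k-1}.$$
Using $G_\mu(z)=\tfrac1zM_\mu(\tfrac1z)$, $G_\nu(z)=\tfrac1zM_\nu(\tfrac1z)$ and the substitution $z\mapsto 1/z$ turns this into
$$\frac{1}{G_\mu(z)}=z-\sum_{k=1}^\infty R_k(\X)\,G_\nu(z)^{k-1},$$
which is the two-state analogue of the free $R$-transform relation and is exactly where the scaled cumulants enter.

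Applying this to $(\mu_t,\nu_t)$, the measure $\nu_t$ carries only $r_2^{\nu_t}=t$, so it is the centred semicircle law of variance $t$; its Cauchy transform satisfies $1/G_{\nu_t}(z)=z-t\,G_{\nu_t}(z)$, the self-similar continued fraction with $\alpha_n=0$, $\beta_n=t$, which is \eqref{eq:JacobiDefiMeixner4}. Feeding $R_1^{(\mu_t,\nu_t)}=at$, $R_2^{(\mu_t,\nu_t)}=bt$ (higher cumulants vanishing) into the identity above gives $1/G_{\mu_t}(z)=z-at-bt\,G_{\nu_t}(z)$, and expanding $G_{\nu_t}$ as its own continued fraction produces
$$G_{\mu_t}(z)=\cfrac{1}{z-at-\cfrac{bt}{z-\cfrac{t}{z-\cfrac{t}{z-\ddots}}}}.$$
Comparing with \eqref{eq:CiaglaFrakcja} reads off $\alpha_0=at$, $\beta_0=bt$ and $\alpha_n=0$, $\beta_n=t$ for $n\ge1$, i.e. \eqref{eq:JacobiDefiMeixner3}.

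The point still to be secured — and the one I expect to be the only delicate issue — is existence of the power for all $t\ge0$, that is, that the scaled cumulants genuinely come from positive measures; this is the content inherited from \cite{AnMlotko}. For $t>0$ it can in fact be extracted from the continued fraction itself: the off-diagonal weights $\beta_0=bt$ and $\beta_n=t$ are strictly positive (recall $b>0$) and bounded, so by Favard's theorem the parameters \eqref{eq:JacobiDefiMeixner3}--\eqref{eq:JacobiDefiMeixner4} determine unique compactly supported probability measures $\mu_t,\nu_t$, while at $t=0$ the fraction terminates and $\mu_0=\nu_0=\delta_0$. By contrast, the algebra of reading off the Jacobi parameters is routine once the transform identity above is in hand; it is the uniform-in-$t$ positivity ($\boxplus_c$-infinite divisibility) that genuinely requires the main result of \cite{AnMlotko}.
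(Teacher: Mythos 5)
Your proposal is correct, but it takes a genuinely different route from the paper: the paper gives no proof of Theorem \ref{twr:1} at all, presenting it as a direct consequence of the main result of \cite{AnMlotko} on semigroups of distributions with linear Jacobi parameters, whereas you construct a self-contained argument. Concretely, you combine (i) the definition of the $\boxplus_c$ power as scaling of both cumulant sequences, (ii) the Bo\.zejko--Leinert--Speicher identity in Cauchy-transform form, $1/G_{\mu_t}(z)=z-at-bt\,G_{\nu_t}(z)$ --- which you re-derive by iterating Lemma \ref{lem:2}, although you could equally have quoted \eqref{eq:Speicher} or the paper's own Proposition in Section 4.1, of which your summed identity is the $k=1$ case --- (iii) the semicircle equation $1/G_{\nu_t}(z)=z-tG_{\nu_t}(z)$, and (iv) the continued fraction \eqref{eq:CiaglaFrakcja} to read off the Jacobi parameters, with Favard's theorem supplying existence since the off-diagonal entries $bt$ and $t$ are positive and bounded (boundedness also giving determinacy and compact support, so the Favard measures really are the unique pair with the scaled cumulants, the moment--cumulant map \eqref{eq:DefinicjaKumulant2} being formally triangular and invertible). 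What each approach buys: the paper's citation is shorter and rests on a theorem covering the much broader class of two-state Meixner pairs with linearly evolving Jacobi parameters, while your computation is elementary, uses only machinery already developed in the paper, and shows that for this particular pair the existence of $(\mu,\nu)^{\boxplus_c t}$ does not require \cite{AnMlotko} at all. One small blemish: your closing sentence, asserting that the uniform-in-$t$ positivity \emph{genuinely requires} the main result of \cite{AnMlotko}, contradicts your own Favard argument two sentences earlier, which already delivers that positivity for every $t\geq 0$; that sentence should be deleted or weakened to say that \cite{AnMlotko} is needed only for the general Meixner-type statement, not for the semicircle/free-Meixner pair at hand.
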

\begin{lemm}[]
Suppose $\mathbb{X}$, $\mathbb{Y}$  are $c$-free and self-adjoint. Denote  the distribution of $\mathbb{X}/\sqrt{\alpha}$ and $\mathbb{Y}/\sqrt{\beta}$ by  $(\mu_1, \nu_1)$ and $(\mu_2, \nu_2)$,   respectively.  We assume that Jacobi parameters for this measure are equal to 
\begin{align}
J(\mu_1)&=\left(\begin{array}{c c c c c }
 a \sqrt{\alpha}, & 0, & 0,& 0, & \dots  \\ 
 b , & 1, & 1,& 1 & \dots
\end{array}\right) \textrm{, }J(\mu_2)=\left(\begin{array}{c c c c c }
 a \sqrt{\beta}, & 0, & 0,& 0, & \dots  \\ 
 b , & 1, & 1,& 1 & \dots
\end{array}\right)  %
\end{align} 
\textrm{and}\\
\begin{align}
J(\nu_1)&=\left(\begin{array}{c c c c c }
0, & 0, & 0,& 0, & \dots  \\ 
1, & 1, & 1,& 1 & \dots
\end{array}\right) 
\textrm{, }\textrm{ }\textrm{ }\textrm{ }\textrm{ }\textrm{ }\textrm{ }\textrm{ }J(\nu_2)=\left(\begin{array}{c c c c c }
0, & 0, & 0,& 0, & \dots  \\ 
1, & 1, & 1,& 1 & \dots
\end{array}\right). 
\end{align}
where $\alpha,\beta > 0 $, $\alpha+\beta =1 $.  Then   
 $\X+\Y$ has two-state normal law $(\mu,\nu)$ with Jacobi parameters
\eqref{eq:JacobiDefiNoramal1} and \eqref{eq:JacobiDefiNoramal2}, respectively . 
\label{lem:5}
\end{lemm}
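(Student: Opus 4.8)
The plan is to identify the individual two-state laws of $\X$ and $\Y$ as fractional $c$-free powers of the target pair $(\mu,\nu)$, and then to invoke the additivity of the $c$-free convolution coming from the hypothesis that $\X$ and $\Y$ are $c$-free. The whole argument is really a matching of Jacobi parameters against Theorem \ref{twr:1}, together with the cumulant definition of $\boxplus_c$.

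First I would pass from the normalized variables $\X/\sqrt{\alpha}$ and $\Y/\sqrt{\beta}$ to $\X$ and $\Y$ themselves by the dilation rule for Jacobi parameters: if a measure $\rho$ has parameters $J(\rho)=(\alpha_0,\alpha_1,\dots;\beta_0,\beta_1,\dots)$, then its push-forward under $x\mapsto cx$ has parameters $(c\alpha_0,c\alpha_1,\dots;c^2\beta_0,c^2\beta_1,\dots)$. Applying this with $c=\sqrt{\alpha}$ to $(\mu_1,\nu_1)$ shows that the $\varphi$-law of $\X$ has parameters $(a\alpha,0,0,\dots;b\alpha,\alpha,\alpha,\dots)$ and its $\psi$-law has parameters $(0,0,\dots;\alpha,\alpha,\dots)$; likewise, with $c=\sqrt{\beta}$, the $\varphi$- and $\psi$-laws of $\Y$ have parameters $(a\beta,0,0,\dots;b\beta,\beta,\beta,\dots)$ and $(0,0,\dots;\beta,\beta,\dots)$.

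Next I would match these against Theorem \ref{twr:1}. Reading \eqref{eq:JacobiDefiMeixner3}--\eqref{eq:JacobiDefiMeixner4} at $t=\alpha$ reproduces exactly the Jacobi parameters just computed for $\X$, so the two-state law of $\X$ equals $(\mu_\alpha,\nu_\alpha)=(\mu,\nu)^{\boxplus_c\alpha}$; reading them at $t=\beta$ shows that the law of $\Y$ is $(\mu,\nu)^{\boxplus_c\beta}$. Since $\X$ and $\Y$ are $c$-free, multilinearity of the cumulants together with the vanishing of mixed $R_n$ and $r_n$ gives $R_n(\X+\Y)=R_n(\X)+R_n(\Y)$ and $r_n(\X+\Y)=r_n(\X)+r_n(\Y)$, so the two-state law of $\S=\X+\Y$ is the $c$-free convolution of the two individual laws. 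Because the $c$-free power is defined by $R_n^{(\mu,\nu)^{\boxplus_c t}}=tR_n^{(\mu,\nu)}$ and $\nu_t=\nu^{\boxplus t}$, it forms a convolution semigroup, hence
\begin{align}
(\mu,\nu)^{\boxplus_c\alpha}\boxplus_c(\mu,\nu)^{\boxplus_c\beta}=(\mu,\nu)^{\boxplus_c(\alpha+\beta)}=(\mu,\nu)^{\boxplus_c 1}=(\mu,\nu), \nonumber
\end{align}
using $\alpha+\beta=1$. Since $(\mu,\nu)$ has Jacobi parameters \eqref{eq:JacobiDefiNoramal1}--\eqref{eq:JacobiDefiNoramal2}, this is precisely the two-state normal law, which proves the lemma.

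The step I expect to require the most care is the first one: confirming the dilation rule for Jacobi parameters and verifying that $t=\alpha$ (respectively $t=\beta$) reproduces \emph{all} the entries of \eqref{eq:JacobiDefiMeixner3}--\eqref{eq:JacobiDefiMeixner4}, in particular the distinction between the $\beta_0$ entry $b\alpha$ and the remaining entries $\alpha$. The other ingredients---that summing $c$-free elements realizes $\boxplus_c$ and that the $c$-free power is additive in the parameter $t$---are immediate from the definition of $\boxplus_c$ through the two-state cumulants.
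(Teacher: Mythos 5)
Your proposal is correct and follows essentially the same route as the paper: the dilation rule \eqref{eq:Jacobi7} to pass from $\X/\sqrt{\alpha}$, $\Y/\sqrt{\beta}$ to $\X$, $\Y$, followed by an application of Theorem \ref{twr:1} at $t=\alpha$ and $t=\beta$. You merely spell out the steps the paper leaves implicit---identifying each summand's law as a fractional $c$-free power, the vanishing of mixed cumulants under $c$-freeness, and the semigroup additivity giving $(\mu,\nu)^{\boxplus_c(\alpha+\beta)}=(\mu,\nu)$---which is a faithful expansion, not a different argument.
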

\begin{proof}
We use the following well-known fact if a certain 
 variable $\mathbb{X}$ has distribution with Jacobi parameters 
\eqref{eq:Jacobi1} then $\gamma\mathbb{X}$ (where $\gamma\in \mathbb{R}$)  has the following   Jacobi parameters (see \cite{HorObata})
\begin{align}
\left(\begin{array}{c c c c}
\gamma\alpha_0, & \gamma\alpha_1, & \gamma\alpha_2, & \dots  \\ 
\gamma^2\beta_0, & \gamma^2\beta_1, & \gamma^2\beta_2, & \dots
\end{array}\right).\label{eq:Jacobi7}
\end{align} 
Thus we deduce that $\mathbb{X}$ and $\mathbb{Y}$ has respectively the following Jacobi parameters (with respect to the state $\varphi$)
\begin{align}
\left(\begin{array}{c c c c c }
 a \alpha, & 0, & 0,& 0, & \dots  \\ 
 b \alpha, & \alpha, & \alpha,& \alpha & \dots
\end{array}\right) \textrm{, } 
\left(\begin{array}{c c c c c }
 a \beta, & 0, & 0,& 0, & \dots  \\ 
 b \beta, & \beta, & \beta,& \beta & \dots
\end{array}\right). \nonumber 
\end{align}
Using Theorem \ref{twr:1} we deduce that the law of $\mathbb{X}+\mathbb{Y}$ is given by \eqref{eq:JacobiDefiNoramal1} with respect to the state $\varphi$. Analogously,~we have that $\mathbb{X}+\mathbb{Y}$ has Jacobi parameters \eqref{eq:JacobiDefiNoramal2} with respect to the state $\psi.$
\end{proof}

\section{A new relation  in conditionally free probability}
\subsection{A generalization of Bo\.zejko, Leinert and Speicher's  identity}
From \cite{BoLR}, we have  the following relation
\begin{align}  
M_\mu(z)\big(1-z\mathcal{R}_{\mathbb{X}}(zM_\nu(z))\big)=1 . \label{eq:Speicher}
\end{align}
The relation \eqref{eq:Speicher} can be generalized as follows:
\begin{prop}
Suppose that $\mathbb{X}$ is a self-adjoint element of the algebra $\mathcal{A}$ 
, then 
\begin{align}  
C_{\varphi,\psi}^{(k)}(z)=\mathcal{R}_{\mathbb{X}}^{(k)}(zM_\nu(z))z^kM_\mu(z),
\end{align} 
where $\mathcal{R}_{\mathbb{X}}^{(k)}(z)=\sum_{i=k}^{\infty}R_{i}(\mathbb{X})z^{i-k}$. 
\end{prop}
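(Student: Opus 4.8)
The plan is to iterate the recursion of Lemma \ref{lem:2} and then recognize the resulting series as the claimed composition. Writing the recursion as
\begin{align}
C^{(k)}_{\varphi,\psi}(z)=M_\nu(z)\,C^{(k+1)}_{\varphi,\psi}(z)+R_k(\X)z^kM_\mu(z),\nonumber
\end{align}
I would substitute the same identity for $C^{(k+1)}_{\varphi,\psi}(z)$, then for $C^{(k+2)}_{\varphi,\psi}(z)$, and so on. After $N$ steps this produces
\begin{align}
C^{(k)}_{\varphi,\psi}(z)=M_\nu(z)^N C^{(k+N)}_{\varphi,\psi}(z)+M_\mu(z)\sum_{j=0}^{N-1}R_{k+j}(\X)\,z^{k+j}M_\nu(z)^j,\nonumber
\end{align}
which is verified by a routine induction on $N$: the base case is the single-step recursion, and the inductive step follows by applying that recursion once more to $C^{(k+N)}_{\varphi,\psi}(z)$ and absorbing the new term into the sum.

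Next I would pass to the limit $N\to\infty$. The key observation is that, by its very definition, $C^{(k+N)}_{\varphi,\psi}(z)=\sum_{n=0}^\infty \varphi_{k+N}(\X^{k+N+n})z^{k+N+n}$ has lowest-order term of degree $k+N$, so the remainder $M_\nu(z)^N C^{(k+N)}_{\varphi,\psi}(z)$ has order at least $k+N$ and therefore tends to $0$ as $N\to\infty$, both in the formal-power-series topology and, for sufficiently small $|z|$, analytically (all the series involved converge there). Discarding the remainder yields
\begin{align}
C^{(k)}_{\varphi,\psi}(z)=M_\mu(z)\sum_{j=0}^{\infty}R_{k+j}(\X)\,z^{k+j}M_\nu(z)^j.\nonumber
\end{align}

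Finally, factoring out $z^k$ and using $z^{j}M_\nu(z)^j=(zM_\nu(z))^j$ gives
\begin{align}
C^{(k)}_{\varphi,\psi}(z)=z^kM_\mu(z)\sum_{j=0}^{\infty}R_{k+j}(\X)\,(zM_\nu(z))^j,\nonumber
\end{align}
and the sum is exactly $\mathcal{R}^{(k)}_{\mathbb{X}}(zM_\nu(z))$ after the reindexing $i=k+j$ in the definition $\mathcal{R}^{(k)}_{\mathbb{X}}(w)=\sum_{i=k}^\infty R_i(\X)w^{i-k}$. This is the asserted identity; as a sanity check, the case $k=1$ together with $C^{(1)}_{\varphi,\psi}(z)=M_\mu(z)-1$ recovers the Bo\.zejko--Leinert--Speicher formula \eqref{eq:Speicher}. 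The only genuinely delicate point is the justification that the remainder term vanishes in the limit; everything else is bookkeeping with the recursion.
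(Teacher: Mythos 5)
Your proof is correct, and it takes a recognizably different route from the paper's, although both run on the same engine, namely the recursion of Lemma \ref{lem:2}. The paper argues by induction on $k$: the base case $k=1$ is the Bo\.zejko--Leinert--Speicher identity \eqref{eq:Speicher}, imported from \cite{BoLR} and combined with $C^{(1)}_{\varphi,\psi}(z)=M_\mu(z)-1$, and the step $k\Rightarrow k+1$ solves the recursion for $C^{(k+1)}_{\varphi,\psi}$. You instead unroll the recursion $N$ times and dispose of the remainder $M_\nu(z)^N C^{(k+N)}_{\varphi,\psi}(z)$ by the valuation argument that $C^{(k+N)}_{\varphi,\psi}$ has order $k+N$ in $z$, so every fixed coefficient stabilizes as $N\to\infty$; your finite-$N$ identity and the reindexing $i=k+j$ are both verified easily. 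What each approach buys: the paper's induction is shorter but logically presupposes \eqref{eq:Speicher}, whereas your telescoping is self-contained and in fact \emph{reproves} \eqref{eq:Speicher} as the case $k=1$, since $C^{(1)}_{\varphi,\psi}(z)=M_\mu(z)-1$ follows directly from the moment--cumulant formula \eqref{eq:DefinicjaKumulant1}. Two small remarks. First, your side claim that the remainder also vanishes \emph{analytically} for small $|z|$ would require bounds on $\varphi_{k+N}(\X^{k+N+n})$ uniform in $N$, which you do not supply; this is harmless, because coefficientwise (formal power series) convergence already yields the identity, and since both sides are convergent series for small $|z|$, equality as analytic functions follows without the uniform estimate. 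Second, your algebra incidentally repairs a typo in the paper's displayed induction step: the correct manipulation is $C^{(k+1)}_{\varphi,\psi}(z)=\bigl(C^{(k)}_{\varphi,\psi}(z)-R_k(\X)z^kM_\mu(z)\bigr)/M_\nu(z)$ rather than $C^{(k)}_{\varphi,\psi}(z)/M_\mu(z)-R_k(\X)z^k$, and with the induction hypothesis this yields $z^{k+1}M_\mu(z)\mathcal{R}^{(k+1)}_{\mathbb{X}}(zM_\nu(z))$, exactly as in your computation.
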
 
\begin{proof}
We prove this by the induction on $k$. The case $k = 1$ is clear because $C_\mu^{(1)}(z)=M_\mu(z)-1$. 
The induction step $k \Rightarrow k+1 $ (for $k>1$) follows immediately  using Lemma \ref{lem:2} which gives
\begin{align}  
C^{(k+1)}_{\varphi,\psi}(z)=\frac{C^{(k)}_{\varphi,\psi}(z)}{M_\mu(z)}-R_k(\X)z^k&=\mathcal{R}_{\mathbb{X}}^{(k)}(zM_\nu(z))z^k-R_k(\X)z^k 
\\&=\mathcal{R}_{\mathbb{X}}^{(k+1)}(zM_\nu(z))z^{k+1}M_\mu(z). 
\end{align}  
\end{proof}
\subsection{A new relation between moments }
In this subsection we explain the motivation for introducing $\varphi_{\shortparallel}$ from the
point of view of two-state free probability. The answer to this problem turns out to be the following:
the relation between  measures whose Jacobi parameters are described by \eqref{eq:Jacobi1} and  other measure whose Jacobi parameter  equals 
 \begin{align}
\left(\begin{array}{c c c c}
\alpha_1, & \alpha_2, & \alpha_3, & \dots  \\ 
\beta_1, & \beta_2, & \beta_3, & \dots
\end{array}\right),\label{eq:Jacobi2}
\end{align} 
is contained in  $\varphi_{\shortparallel}$. 
\begin{theo}
Suppose that $\mathbb{X}$ and $\mathbb{Y}$ are self-adjoint elements of algebra $\mathcal{A}$. 
Denote by $\mu$ the distribution of 
$\X$ with respect to $\varphi$, and by $\rho$ the distribution of 
$\Y$ with respect to  $\varphi$ (the distribution of the state $\psi$ is irrelevant in this theorem). If measure $\mu$ has Jacobi parameters described by \eqref{eq:Jacobi1} where $\beta_0>0$, 
then the relation between the measure $\rho$ of the  variable $\mathbb{Y}$ described by the
parameter \eqref{eq:Jacobi2} is given by
\begin{align} 
\varphi_{\shortparallel}(\X^{n+2})=\beta_0\varphi(\Y^n)
, \label{eq:TwierdzenieRozdzial4Rozklady}
\end{align} 
for all $n\geqslant 0$.
\label{twr:5}
 \end{theo}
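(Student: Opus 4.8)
The plan is to compute the generating function $C_{\shortparallel}(z)$ in closed form and then recognise it as the moment series of $\rho$ scaled by $\beta_0$. First I would invoke Lemma \ref{lem:4}: dividing \eqref{eq:TransformatyKombinatorycznePierwszyOstatni} by $M_\mu(z)$ (a formal power series with constant term $1$, hence invertible) gives
\[
C_{\shortparallel}(z) = 1 - \frac{1}{M_\mu(z)} - R_1(\X)\,z .
\]
Since the first free cumulant is the mean, $R_1(\X) = \varphi(\X) = m_1(\mu) = \alpha_0$, so the right-hand side depends only on $M_\mu$ and on the first Jacobi parameter $\alpha_0$. This already explains the remark that the $\psi$-law is irrelevant: although the individual terms defining $\varphi_{\shortparallel}$ involve $\psi$, Lemma \ref{lem:4} collapses their generating function onto data of $\mu$ alone.

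Next I would bring in the continued-fraction description \eqref{eq:CiaglaFrakcja}. Because the Jacobi array \eqref{eq:Jacobi2} of $\rho$ is exactly the array \eqref{eq:Jacobi1} of $\mu$ with its first column deleted, the standard tail identity for the Stieltjes continued fraction yields
\[
G_\mu(z) = \frac{1}{\,z - \alpha_0 - \beta_0\,G_\rho(z)\,}, \qquad\text{equivalently}\qquad \beta_0\,G_\rho(z) = z - \alpha_0 - \frac{1}{G_\mu(z)} .
\]
I would then pass from Cauchy--Stieltjes transforms to moment series through $G_\mu(1/z) = z\,M_\mu(z)$ and $G_\rho(1/z) = z\,M_\rho(z)$, where $M_\rho(z) = \sum_{n\geq 0}\varphi(\Y^n)\,z^n$. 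Substituting $z \mapsto 1/z$ in the tail identity and clearing the factors of $z$ gives
\[
\beta_0\,z^2 M_\rho(z) = 1 - \frac{1}{M_\mu(z)} - \alpha_0\,z ,
\]
whose right-hand side is precisely the closed form of $C_{\shortparallel}(z)$ obtained above. Hence $C_{\shortparallel}(z) = \beta_0\,z^2 M_\rho(z)$.

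Finally I would extract coefficients. On the left $C_{\shortparallel}(z) = \sum_{n\geq 0}\varphi_{\shortparallel}(\X^{n+2})\,z^{n+2}$, while on the right $\beta_0\,z^2 M_\rho(z) = \sum_{n\geq 0}\beta_0\,\varphi(\Y^n)\,z^{n+2}$; comparing the coefficient of $z^{n+2}$ yields $\varphi_{\shortparallel}(\X^{n+2}) = \beta_0\,\varphi(\Y^n)$ for every $n\geq 0$, as claimed.

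The main obstacle is the bookkeeping of the transform conversions: correctly matching the continued-fraction tail, which naturally lives on the $G$ side, with Lemma \ref{lem:4}, which lives on the $M$ side, and tracking the substitution $z \mapsto 1/z$ together with the powers of $z$ so that the two closed forms line up exactly. Identifying $R_1(\X)$ with the Jacobi parameter $\alpha_0$ is the small but essential input that makes the linear terms cancel; once that and the tail identity are in place, the coefficient comparison is immediate.
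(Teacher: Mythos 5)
Your proposal is correct and follows essentially the same route as the paper's proof: the Stieltjes continued-fraction tail identity $G_\mu(z)=1/(z-\alpha_0-\beta_0 G_\rho(z))$ converted to moment series, combined with Lemma \ref{lem:4} (using $R_1(\X)=\varphi(\X)=\alpha_0$, which the paper uses silently), to conclude $C_{\shortparallel}(z)=\beta_0 z^2 M_\rho(z)$ and compare coefficients. The only difference is presentational: you solve both relations to the closed form $1-1/M_\mu(z)-\alpha_0 z$ before equating, whereas the paper substitutes one identity into the other.
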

\begin{proof}
From \eqref{eq:CiaglaFrakcja} we have 
\begin{align} 
G_\mu(z)=\frac{1}{z-\alpha_0-\beta_0G_\rho(z)}.
\end{align} 
By using the relations $M_\mu(z)=\frac{1}{z}G_\mu(\frac{1}{z})$ and  $M_\rho(z)=\frac{1}{z}G_\rho(\frac{1}{z})$ we see that
\begin{align} 
M_\mu(z)({1-z\alpha_0-\beta_0z^2M_\rho(z)})=1. \label{eq:Rozdzial4pom1}
\end{align} 
Applying Lemma  \ref{lem:4} we get
\begin{align}
M_\mu(z)-1=M_\mu(z)C_{\shortparallel}(z)+\alpha_0zM_\mu(z). \label{eq:Rozdzial4pom2}
\end{align} 
where $C_{\shortparallel}(z)$ is the function for  $\X$. Now we substitute \eqref{eq:Rozdzial4pom2} to the equation \eqref{eq:Rozdzial4pom1}  and after a simple  computation, we obtain
\begin{align} 
\beta_0z^2M_\rho(z)=C_{\shortparallel}(z), \label{eq:Rozdzial4pom3}
\end{align}
which is equivalent to \eqref{eq:TwierdzenieRozdzial4Rozklady} and this completes the proof.
\end{proof}

\begin{cor}
An important benefit of the above theorem is the following one if we know  $\varphi(\X)$, $\varphi(\X^2)$  and $C_{\shortparallel}(z)$ then we also know the measure $\mu$. This result will be applied in the proof of the main theorem. \label{cor:Wniosek1}
 \end{cor}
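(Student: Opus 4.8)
The plan is to show that the three pieces of data together pin down every Jacobi parameter of $\mu$, after which $\mu$ itself is recovered through the continued-fraction expansion \eqref{eq:CiaglaFrakcja}. The whole argument is an unpacking of Theorem \ref{twr:5}: that theorem says $C_{\shortparallel}$ carries exactly the information of the shifted measure $\rho$ whose Jacobi array is \eqref{eq:Jacobi2}, and the two moments $\varphi(\X),\varphi(\X^2)$ supply the missing top entries $\alpha_0,\beta_0$.

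First I would read off the top Jacobi parameters from the first two moments. For a measure with Jacobi parameters \eqref{eq:Jacobi1} one has $m_1=\alpha_0$ and $m_2=\alpha_0^2+\beta_0$, so
\begin{align}
\alpha_0=\varphi(\X),\qquad \beta_0=\varphi(\X^2)-\varphi(\X)^2 .
\end{align}
In particular $\beta_0$ is now known, and by the standing hypothesis $\beta_0>0$. Next I would invoke the internal identity \eqref{eq:Rozdzial4pom3} established inside the proof of Theorem \ref{twr:5}, namely $\beta_0 z^2 M_\rho(z)=C_{\shortparallel}(z)$, where $\rho$ is the measure carrying the shifted parameters \eqref{eq:Jacobi2}. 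Since $\beta_0>0$ this can be solved,
\begin{align}
M_\rho(z)=\frac{C_{\shortparallel}(z)}{\beta_0 z^2},
\end{align}
so every moment $m_n(\rho)=\varphi(\Y^n)$ is obtained from the coefficients of $C_{\shortparallel}(z)$.

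Finally I would reconstruct the remaining Jacobi parameters and reassemble $\mu$. Because $\rho$ is compactly supported, it is determined by its moments, hence $M_\rho$ determines $\rho$ and thus, via the Accardi--Bo\.zejko formulas, the shifted array $(\alpha_1,\alpha_2,\dots;\beta_1,\beta_2,\dots)$. Appending $\alpha_0,\beta_0$ yields the full sequence $(\alpha_0,\alpha_1,\dots;\beta_0,\beta_1,\dots)$, which fed into \eqref{eq:CiaglaFrakcja} produces $G_\mu$ and therefore $\mu$. The only step demanding any care is the passage from the power series $M_\rho(z)$ back to the measure $\rho$ and its Jacobi parameters; this is precisely the determinacy of the compactly supported moment problem, so no genuine obstacle arises and the corollary follows directly from Theorem \ref{twr:5}.
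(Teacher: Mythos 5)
Your proposal is correct and is essentially the paper's own (implicit) argument: the corollary is stated without proof precisely because it is the direct unpacking of Theorem \ref{twr:5} that you carry out, namely $\varphi(\X)$ and $\varphi(\X^2)$ recover $\alpha_0$ and $\beta_0$, the identity \eqref{eq:Rozdzial4pom3} then yields $M_\rho(z)=C_{\shortparallel}(z)/(\beta_0 z^2)$ and hence (by moment determinacy for compactly supported measures and the Accardi--Bo\.zejko formulas) the shifted Jacobi array, after which \eqref{eq:CiaglaFrakcja} reassembles $\mu$. As a side remark, Lemma \ref{lem:4} even gives the shortcut $M_\mu(z)=\bigl(1-\varphi(\X)z-C_{\shortparallel}(z)\bigr)^{-1}$, recovering $\mu$ without passing through $\rho$, but your route is the one the corollary's wording (``benefit of the above theorem'') points to.
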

\begin{cor}
 If $\beta_0=1$ then $\varphi_{\shortparallel}(\X^{n+2})$  is the moment of the variable described by Jacobi parameters \eqref{eq:Jacobi2}. \label{cor:Wniosek2}
 \end{cor}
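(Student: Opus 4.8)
The plan is to obtain Corollary \ref{cor:Wniosek2} directly as the special case $\beta_0 = 1$ of Theorem \ref{twr:5}, so that no genuinely new argument is required. First I would verify that the hypothesis of that theorem is satisfied: it asks only for $\beta_0 > 0$, which holds trivially when $\beta_0 = 1$. I retain the notation of Theorem \ref{twr:5}, where $\mu$ is the $\varphi$-law of $\X$ with Jacobi array \eqref{eq:Jacobi1} and $\rho$ is the $\varphi$-law of the companion variable $\Y$ whose Jacobi array is the once-shifted one \eqref{eq:Jacobi2}.

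Next I would substitute $\beta_0 = 1$ into the conclusion \eqref{eq:TwierdzenieRozdzial4Rozklady}, which collapses to
\begin{align}
\varphi_{\shortparallel}(\X^{n+2}) = \varphi(\Y^n), \nonumber
\end{align}
for every $n \geq 0$. By the definition of the $\varphi$-law, $\varphi(\Y^n) = \int_{\mathbb{R}} x^n \rho(dx)$ is precisely the $n$-th moment of $\rho$, and $\rho$ is by construction the measure carried by the Jacobi parameters \eqref{eq:Jacobi2}. Reading the two sides together identifies $\varphi_{\shortparallel}(\X^{n+2})$ with the $n$-th moment of the variable described by \eqref{eq:Jacobi2}, which is exactly the claim.

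I do not expect a real obstacle here, since the entire combinatorial and analytic content is already packaged in Theorem \ref{twr:5} (and through it in Lemma \ref{lem:4} and the continued-fraction expansion \eqref{eq:CiaglaFrakcja}). The only point I would pause on is the implicit well-definedness: one must know that a measure with the shifted parameters \eqref{eq:Jacobi2} exists and has all moments finite. This is automatic, because removing the leading column from an admissible Jacobi array \eqref{eq:Jacobi1} with nonnegative $\beta_i$ again produces an admissible Jacobi array, so the orthogonal-polynomial and continued-fraction machinery used to define $\rho$ and its moments applies without change.
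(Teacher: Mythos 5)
Your proposal is correct and matches the paper exactly: the corollary is stated there without proof precisely because it is the immediate specialization $\beta_0=1$ of Theorem \ref{twr:5}, reducing \eqref{eq:TwierdzenieRozdzial4Rozklady} to $\varphi_{\shortparallel}(\X^{n+2})=\varphi(\Y^n)$, the $n$-th moment of the measure with Jacobi parameters \eqref{eq:Jacobi2}. Your added check that the shifted Jacobi array remains admissible is a sensible extra precaution, though the paper takes it for granted.
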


\subsection{Some consequences for a two-state normal distribution}

\begin{prop}
Suppose that we have a 
self-adjoint variable  $\X$ with mean $\varphi(\X)= a $ and second moment  $\varphi(\X^2)= b + a ^2$. 
Then the following statements are equivalent:
\begin{enumerate}
\item the Jacobi parameter is equal to
\begin{align}
J(\varphi)=J(\mu)=\left(\begin{array}{c c c c c }
 a , & 0, & 0 ,& 0, & \dots  \\ 
 b , & 1, & 1,& 1 & \dots
\end{array}\right)\label{eq:Jacobi3}, 
\end{align}   
\item the following equation 
 \begin{align}
 \varphi_{\shortparallel}(\X^{n+2})= b \int x^n \rho(dx), \label{eq:WzorWniosekzTwierdzenieRozdz3}
\end{align}
is satisfied for all $n\geqslant 0$ and $\rho$ is the  Wigner's semicircle law with mean 0 and variance 1,

\item $C_{\shortparallel}(z)$ satisfies the equation
\begin{align}
(C_{\shortparallel}(z))^2- b C_{\shortparallel}(z)+ b ^2z^2=0,
\label{eq:MomentyGenerujaceFunkcjaLemat}
\end{align}
\item the relation 
\begin{align}
M_\mu(z) \big( b ^2z^2- b (1-z a )+(1-z a )^2\big)-1+z a + b =C_{\shortparallel}(z),\label{eq:PierwszyOstatniorazVarphi}
\end{align}
is true.
\end{enumerate}
\label{lem:6}
\end{prop}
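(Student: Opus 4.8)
The plan is to prove the four conditions equivalent through the chain $(1)\Leftrightarrow(2)$, $(2)\Leftrightarrow(3)$, $(3)\Leftrightarrow(4)$. The unifying tool is Lemma \ref{lem:4}: since the hypothesis $\varphi(\X)=a$ forces $R_1(\X)=a$, the conclusion \eqref{eq:TransformatyKombinatorycznePierwszyOstatni} can be rewritten as
\begin{align}
\frac{1}{M_\mu(z)}=(1-az)-C_{\shortparallel}(z),\nonumber
\end{align}
an identity holding unconditionally for small $|z|$ (where $M_\mu(0)=1$) which links the $M_\mu$-side of the statement to the $C_{\shortparallel}$-side.

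For $(3)\Leftrightarrow(4)$ I would substitute $M_\mu(z)=\big((1-az)-C_{\shortparallel}(z)\big)^{-1}$ into \eqref{eq:PierwszyOstatniorazVarphi}. Writing $u=1-az$ and clearing the denominator $u-C_{\shortparallel}(z)$, the terms containing $u$ cancel and \eqref{eq:PierwszyOstatniorazVarphi} collapses exactly to
\begin{align}
\big(C_{\shortparallel}(z)\big)^2-bC_{\shortparallel}(z)+b^2z^2=0,\nonumber
\end{align}
which is \eqref{eq:MomentyGenerujaceFunkcjaLemat}. Because $M_\mu$ and $u-C_{\shortparallel}$ are invertible formal power series (both have nonzero constant term), every step reverses, giving the equivalence in both directions.

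For $(2)\Leftrightarrow(3)$ I use that, by definition, $C_{\shortparallel}(z)=\sum_{n\geq0}\varphi_{\shortparallel}(\X^{n+2})z^{n+2}$; hence \eqref{eq:WzorWniosekzTwierdzenieRozdz3} is equivalent to $C_{\shortparallel}(z)=bz^2M_\rho(z)$ with $\rho$ the standard semicircle law. As the semicircle moment series obeys $z^2M_\rho(z)^2-M_\rho(z)+1=0$, substituting $M_\rho(z)=C_{\shortparallel}(z)/(bz^2)$ reproduces \eqref{eq:MomentyGenerujaceFunkcjaLemat}; conversely \eqref{eq:MomentyGenerujaceFunkcjaLemat} has a unique power series solution with $C_{\shortparallel}(0)=0$, namely $C_{\shortparallel}(z)=\tfrac{b}{2}\big(1-\sqrt{1-4z^2}\big)$, and dividing by $bz^2$ returns the semicircle series, i.e. (2). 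Finally, $(1)\Rightarrow(2)$ is a direct application of Theorem \ref{twr:5}: under (1) the array \eqref{eq:Jacobi3} has $\beta_0=b>0$ and its shifted array \eqref{eq:Jacobi2} equals $(0,0,\dots;1,1,\dots)$, the Jacobi parameters of the standard semicircle, so \eqref{eq:TwierdzenieRozdzial4Rozklady} yields \eqref{eq:WzorWniosekzTwierdzenieRozdz3}.

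The step I expect to require the most care is the converse $(2)\Rightarrow(1)$, where the full Jacobi array must be reconstructed rather than just its head. The two moment hypotheses already pin down $\alpha_0=\varphi(\X)=a$ and $\beta_0=\varphi(\X^2)-\varphi(\X)^2=b$ via the continued fraction \eqref{eq:CiaglaFrakcja}. To recover the tail I would invoke the general identity $C_{\shortparallel}(z)=\beta_0z^2M_\rho(z)$ relating $\mu$ to its shifted measure $\rho$ (equation \eqref{eq:Rozdzial4pom3}, valid whenever $\beta_0>0$): comparing it with $C_{\shortparallel}(z)=bz^2M_\rho(z)$ from (2) forces the shifted measure to have the same moments as the standard semicircle, hence array $(0,\dots;1,\dots)$, so $\mu$ carries the array \eqref{eq:Jacobi3}; this is precisely the determinacy recorded in Corollary \ref{cor:Wniosek1}. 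The only genuine subtleties are choosing the correct square-root branch (fixed by $C_{\shortparallel}(0)=0$) and justifying the manipulations as identities of series convergent for small $|z|$, which the stated convergence of $C_{\shortparallel}$ and $M_\mu$ supplies.
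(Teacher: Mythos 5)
Your proof is correct and follows essentially the same route as the paper: both hinge on Lemma \ref{lem:4} in the form $M_\mu(z)\big(1-az-C_{\shortparallel}(z)\big)=1$, on the semicircle quadratic $z^2M_\rho^2(z)-M_\rho(z)+1=0$ for the link between (2) and (3), on Theorem \ref{twr:5} for $(1)\Rightarrow(2)$, and on Corollary \ref{cor:Wniosek1} together with the continued fraction \eqref{eq:CiaglaFrakcja} for recovering the Jacobi array. The only difference is organizational: you make each link an explicit biconditional (checking invertibility of $M_\mu$ and $1-az-C_{\shortparallel}$, and fixing the square-root branch via $C_{\shortparallel}(0)=0$), thereby spelling out the $(2)\Rightarrow(1)$ reconstruction that the paper compresses into the remark that ``each of the steps above are equivalent.''
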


\begin{proof} 
1 $\Rightarrow$ 2. 
 By Theorem \ref{twr:5} we have that $\varphi_{\shortparallel}(\X^{n+2})/ b $ is the moment of measure described by Jacobi parameters 
\begin{align}
\left(\begin{array}{c c c c c }
 0, & 0,& 0, & 0, &\dots  \\ 
 1, & 1,& 1, & 1, & \dots
\end{array}\right),\label{eq:Jacobi4} 
\end{align}
which gives that this is Wigner's semicircle law.
\\
2 $\Rightarrow$ 3. 
The moment generating function of measure $\rho$ given by the Jacobi parameters \eqref{eq:Jacobi4} satisfies (see \cite{Ejs}) the  equation 
\begin{align}
M_\rho^2(z)z^2-M_\rho(z)+1=0. \label{eq:MomentyGenerujaceFunkcja}
\end{align}
The equation \eqref{eq:WzorWniosekzTwierdzenieRozdz3} is equivalent to $C_{\shortparallel}(z)= b z^2M_\rho(z)$ and substituting this  to the equation \eqref{eq:MomentyGenerujaceFunkcja}  we get \eqref{eq:MomentyGenerujaceFunkcjaLemat}. 
\\3 $\Rightarrow$ 4.
If we now apply Lemma \ref{lem:4} i.e.  $M_\mu(z)C_{\shortparallel}(z)=M_\mu(z)-1-z a M_\mu(z)$ to the equation \eqref{eq:MomentyGenerujaceFunkcjaLemat} we get
\begin{align}
(M_\mu(z)-1-z a M_\mu(z))C_{\shortparallel}(z) 
 - b (M_\mu(z)-1-z a M_\mu(z))+ b ^2z^2M_\mu(z)=0, \label{eq:MomentyGenerujaceFunkcja2}
\end{align}
and then we  use $M_\mu(z)-1-z a M_\mu(z)=C_{\shortparallel}(z)M_\mu(z)$ again and after simple computation we get \eqref{eq:PierwszyOstatniorazVarphi}.
\medskip
%
\\ \noindent
4 $\Rightarrow$ 1. As we have seen in the  explanation above, each of the  steps above are equivalent so from \eqref{eq:MomentyGenerujaceFunkcja2} we get \eqref{eq:WzorWniosekzTwierdzenieRozdz3} and taking into account that $\varphi(\X)= a $,   $\varphi(\X)= b + a ^2$ and Corollary \ref{cor:Wniosek1} we get that Jacobi parameters for $\mu$ are equal to \eqref{eq:Jacobi3}.

\end{proof}

\subsection{Regression characterization for two-state algebras }
The original motivation for this paper is a desire to understand the Theorem 2.1 in the work of Bo\.zejko and Bryc \cite{BoBr2}.
They proved, that if we have a two-state variable with the same distribution and we assume a two-state analog  of conditional quadratic  variances  (equation \eqref{eq:TwierdzenieRozkladyRozdz3}, below), then the corresponding moment generating function satisfies some relationship i.e. \eqref{eq:TwierdzenieRelacjaSzeregi3}.
An open problem in this area is the converse implication to their theorem. In the theorem below we present a stronger version of Theorem 2.1 from the work \cite{BoBr2} and the converse implication.

\begin{theo}
Suppose $\X$, $\Y$ are self-adjoint, $c$-free, $\varphi(\mathbb{X}+\mathbb{Y})= a $, $\varphi((\mathbb{X}+\mathbb{Y})^2)= b + a ^2$,  $\beta R_k(\X)= \alpha R_k(\Y) \textrm{ for some }\alpha,\beta>0$, $\alpha+\beta=1$  and all integers  $k\geq 1$. Let $\S=\mathbb{X}+\mathbb{Y}$, then the following  statements are equivalent:

\begin{enumerate}
\item state $\varphi$ is connected with ``conditional quadratic variances'' by the relation  
\begin{align} 
\varphi\big((\beta\mathbb{X}-\alpha\mathbb{Y})^2
\S^n\big)=\frac{\alpha\beta }{\tilde{b}+1}\varphi\big((\tilde{b}\S^{2}+(\tilde{a}- a \tilde{b})\S+\mathbb{I}( b - a \tilde{a}))\S^n \big), 
\label{eq:TwierdzenieRozkladyRozdz3}
\end{align} 

\item   the relation
between the moment generating function  $M_\mu(z)$ and $M_\nu(z)$  is given
by
\begin{align} 
M_\mu(z)=\frac{(\tilde{b}+z\tilde{a})M_\nu(z)-\tilde{b}-1}{M_\nu(z)[( b - a \tilde{a})z^2+z(\tilde{a}- a \tilde{b})+\tilde{b}]-(\tilde{b}+1)(1- a z)}, \label{eq:TwierdzenieRelacjaSzeregi3}
\end{align}
\item  the following relation
between   $C_{\shortparallel}(z)$ and $M_\nu(z)$ 
\begin{align} 
C_{\shortparallel}(z)=\frac{M_\nu(z)z^2 b }{-M_\nu(z)(\tilde{b}+z\tilde{a})+\tilde{b}+1},\label{eq:TwierdzenieRozkladyRozdz3fipsi}
\end{align}
where $a,b,\tilde{a},\tilde{b} \in \mathbb{R} $, $\tilde{b} > -1$ and $b> 0$ is satisfied.
\end{enumerate}

\label{twr:7}
\end{theo}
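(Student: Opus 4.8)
The plan is to translate all three statements into identities among the formal power series $M_\mu(z)$, $M_\nu(z)$ and $C_{\shortparallel}(z)$ (all formed for the single variable $\S=\X+\Y$) and to check that these identities are algebraically equivalent. The skeleton is $1\Leftrightarrow 2$ and $2\Leftrightarrow 3$, each implication being reversible because every manipulation is an equivalence of power series with invertible leading behaviour; the hypotheses $\tilde b>-1$ and $b>0$ serve precisely to keep the relevant denominators (power series with nonzero constant term) invertible.

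First I would treat $1\Leftrightarrow 2$. The left-hand side of \eqref{eq:TwierdzenieRozkladyRozdz3} is exactly the quantity in part~1 of Lemma~\ref{lemm:wlasnosciFunkcjiFi}, so $\varphi\big((\beta\X-\alpha\Y)^2\S^n\big)=\alpha\beta\,\varphi_2(\S^{n+2})$; dividing \eqref{eq:TwierdzenieRozkladyRozdz3} by $\alpha\beta$ turns statement~1 into
\[
(\tilde b+1)\varphi_2(\S^{n+2})=\varphi\big((\tilde b\S^2+(\tilde a-a\tilde b)\S+(b-a\tilde a)\mathbb{I})\S^n\big),\qquad n\ge 0.
\]
I would then multiply by $z^{n+2}$ and sum over $n$. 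The left-hand side becomes $(\tilde b+1)C^{(2)}_{\varphi,\psi}(z)$ by definition, while the right-hand side, after the index shifts $\sum_n\varphi(\S^{n+2})z^{n+2}=M_\mu(z)-1-az$, $\sum_n\varphi(\S^{n+1})z^{n+2}=z(M_\mu(z)-1)$ and $\sum_n\varphi(\S^n)z^{n+2}=z^2M_\mu(z)$ (here $\varphi(\S)=a=R_1(\S)$), becomes a polynomial in $M_\mu(z)$ and $z$. Multiplying through by $M_\nu(z)$ and invoking Example~\eqref{eq:exemK=1}, namely $M_\nu(z)C^{(2)}_{\varphi,\psi}(z)=M_\mu(z)(1-az)-1$, eliminates $C^{(2)}_{\varphi,\psi}(z)$ and leaves a linear equation in $M_\mu(z)$; solving it yields exactly \eqref{eq:TwierdzenieRelacjaSzeregi3}.

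Next, for $2\Leftrightarrow 3$ I would use Lemma~\ref{lem:4}, which for $\S$ reads $M_\mu(z)C_{\shortparallel}(z)=M_\mu(z)(1-az)-1$, i.e. $C_{\shortparallel}(z)=(1-az)-1/M_\mu(z)$. This is a bijective dictionary between $M_\mu(z)$ and $C_{\shortparallel}(z)$, so statement~3 is equivalent to substituting the expression for $M_\mu(z)$ from \eqref{eq:TwierdzenieRelacjaSzeregi3}. Plugging the reciprocal of the right-hand side of \eqref{eq:TwierdzenieRelacjaSzeregi3} into $C_{\shortparallel}(z)=(1-az)-1/M_\mu(z)$ and putting everything over the common denominator $-M_\nu(z)(\tilde b+z\tilde a)+\tilde b+1$, the numerator collapses after cancellation to $M_\nu(z)bz^2$; the key simplification is the polynomial identity $(1-az)(\tilde b+z\tilde a)-(b-a\tilde a)z^2-(\tilde a-a\tilde b)z-\tilde b=-bz^2$, which I would verify by direct expansion. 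This produces \eqref{eq:TwierdzenieRozkladyRozdz3fipsi}, and reversing each step recovers \eqref{eq:TwierdzenieRelacjaSzeregi3}.

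The only genuine obstacle is bookkeeping: keeping the summation index shifts correct and carrying out the algebraic reduction to the clean rational forms. No idea beyond the already-established lemmas is needed — Lemma~\ref{lemm:wlasnosciFunkcjiFi} converts the two-state operator moment into $\varphi_2$, Example~\eqref{eq:exemK=1} couples $C^{(2)}_{\varphi,\psi}$ to $M_\mu$ and $M_\nu$, and Lemma~\ref{lem:4} couples $C_{\shortparallel}$ to $M_\mu$. The content of the theorem is that these couplings are mutually consistent precisely when the rational relations \eqref{eq:TwierdzenieRelacjaSzeregi3} and \eqref{eq:TwierdzenieRozkladyRozdz3fipsi} hold.
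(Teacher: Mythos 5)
Your proposal is correct and takes essentially the same route as the paper: statement 1 is converted into the power-series identity for $C^{(2)}_{\varphi,\psi}(z)$ via Lemma \ref{lemm:wlasnosciFunkcjiFi}, coupled to $M_\mu(z)$ and $M_\nu(z)$ through \eqref{eq:exemK=1} and solved linearly for $M_\mu(z)$, while the passage to statement 3 uses the same dictionary $M_\mu(z)\bigl(1-az-C_{\shortparallel}(z)\bigr)=1$ from Lemma \ref{lem:4}. Your explicit cancellation $(1-az)(\tilde b+z\tilde a)-(b-a\tilde a)z^2-(\tilde a-a\tilde b)z-\tilde b=-bz^2$ is precisely the simplification the paper performs implicitly in deriving \eqref{eq:TwierdzenieRozkladyRozdz3fipsi}, and your remark that every step is an invertible power-series manipulation (nonzero constant terms, $\tilde b+1\neq 0$) matches the paper's reversal argument for $3\Rightarrow 1$.
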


\begin{proof}
1 $\Rightarrow$ 2: Suppose, that the equality \eqref{eq:TwierdzenieRozkladyRozdz3} holds.  
Thus from \eqref{eq:TwierdzenieRozkladyRozdz3} and Lemma \ref{lemm:wlasnosciFunkcjiFi} we get 

 \begin{align}
\alpha\beta \varphi_2(\S^{n+2})=\frac{\alpha\beta }{\tilde{b}+1}\varphi((\tilde{b}\S^{2}+(\tilde{a}- a \tilde{b})\S+\mathbb{I}( b - a \tilde{a}) )\S^n) .
 \end{align}
A routine argument  relates now the power series 
 \begin{align}
(\tilde{b}+1)C^{(2)}_{\varphi,\psi}(z)=\tilde{b}M_\mu(z)-\tilde{b} a z-\tilde{b}+z((\tilde{a}- a \tilde{b})M_\mu(z)-(\tilde{a}- a \tilde{b}))+z^2( b - a \tilde{a}) M_\mu(z)
\nonumber \\ =M_\mu(z)(( b - a \tilde{a})z^2+z(\tilde{a}- a \tilde{b})+\tilde{b})-\tilde{b}-z\tilde{a}. \label{eq:pom3rozdz3}
 \end{align}
If in \eqref{eq:pom3rozdz3} we multiply both sides by $M_\nu(z)$ and use the fact \eqref{eq:exemK=1} with $R_1(\X + \Y) =  a $, we get
 \begin{align}
(\tilde{b}+1)(M_\mu(z)-1- a zM_\mu(z))=M_\nu(z)(M_\mu(z)(( b - a \tilde{a})z^2+z(\tilde{a}- a \tilde{b})+\tilde{b})-\tilde{b}-z\tilde{a}), \label{eq:pom1rozdz3}
 \end{align}
or equivalently
 \begin{align}
M_\mu(z)=\frac{(\tilde{b}+z\tilde{a})M_\nu(z)-\tilde{b}-1}{M_\nu(z)[( b - a \tilde{a})z^2+z(\tilde{a}- a \tilde{b})+\tilde{b}]-(\tilde{b}+1)(1- a z)}. \label{eq:pom2rozdz3}
 \end{align}

\noindent 2 $\Rightarrow$ 3: If we use the formula \eqref{eq:TransformatyKombinatorycznePierwszyOstatni} with $R_1(X + Y) =  a $ to the equation \eqref{eq:pom1rozdz3} we obtain
 \begin{align}
(\tilde{b}+1)C_{\shortparallel}(z)=M_\nu(z)(( b - a \tilde{a})z^2+z(\tilde{a}- a \tilde{b})+\tilde{b})-(\tilde{b}+z\tilde{a})(1-C_{\shortparallel}(z)-z a )M_\nu(z), \label{eq:pom2rozdz3}
 \end{align}
or equivalently

\begin{align} 
C_{\shortparallel}(z)=\frac{M_\nu(z)z^2 b }{-M_\nu(z)(\tilde{b}+z\tilde{a})+\tilde{b}+1}.
\end{align}
3 $\Rightarrow$ 1: Suppose now, that equality \eqref{eq:TwierdzenieRozkladyRozdz3fipsi} holds. Applying \eqref{eq:exemK=1} to \eqref{eq:pom2rozdz3} we obtain (\ref{eq:pom1rozdz3}). Dividing \eqref{eq:pom1rozdz3} by $M_\nu(z)$ and applying \eqref{eq:exemK=1} we obtain (\ref{eq:pom3rozdz3}), which is equivalent to \eqref{eq:TwierdzenieRozkladyRozdz3}.
 \end{proof}

\begin{prop}
Suppose $\X$, $\Y$ are self-adjoint, $c$-free, $\varphi(\mathbb{X}+\mathbb{Y})= a $, $\varphi((\mathbb{X}+\mathbb{Y})^2)= b + a ^2$,  $\beta R_k(\X)= \alpha R_k(\Y) \textrm{ for some }\alpha,\beta>0$, $\alpha+\beta=1$  and all integers  $k\geq 1$. Then the following three statements  are equivalent:
\newline
\begin{align} 
\textrm{ (1) } &
\varphi((\beta\mathbb{X}-\alpha\mathbb{Y})^2\S^{n})=\alpha\beta b \varphi(\S^{n}),
\label{eq:TwierdzenieRozkladyRozdzPropo3}
\\
\textrm{ (2) } &\varphi((\beta\mathbb{X}-\alpha\mathbb{Y})\S^{n}(\beta\mathbb{X}-\alpha\mathbb{Y}))=\alpha\beta b \psi(\S^{n}),
\label{eq:TwierdzenieRozkladyRozdz3fipsiPropo}
\\
%
\textrm{ (3) } &
\varphi((\beta\mathbb{X}-\alpha\mathbb{Y})(\X+\Y)(\beta\mathbb{X}-\alpha\mathbb{Y})\S^{n})=\alpha\beta a \varphi((\beta\mathbb{X}-\alpha\mathbb{Y})^2\S^{n}),
\label{eq:TwierdzenieRozkladyRozdz3fipsiPropoPomoc}
\end{align}
for all non-negative integers $n\geq 0$, where $\S=\X+\Y$. 
\label{twr:9}
\end{prop}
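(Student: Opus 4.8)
The plan is to reduce each of the three relations to an identity between the formal power series $C^{(2)}_{\varphi,\psi}$, $C_{\shortparallel}$ and $C^{(3)}_{\varphi,\psi}$ and the moment series $M_\mu,M_\nu$, and then to read off the equivalences from the structural recursions of Section~3. Throughout write $D=\beta\mathbb{X}-\alpha\mathbb{Y}$, and note that the standing hypotheses $\varphi(\S)=a$ and $\varphi(\S^2)=b+a^2$ give $R_1(\S)=a$ and $R_2(\S)=b$; these are the only scalar inputs required.

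First I would dispatch \eqref{eq:TwierdzenieRozkladyRozdzPropo3}~$\Leftrightarrow$~\eqref{eq:TwierdzenieRozkladyRozdz3fipsiPropo}. By Lemma~\ref{lemm:wlasnosciFunkcjiFi} the left-hand sides of these two relations equal $\alpha\beta\varphi_2(\S^{n+2})$ and $\alpha\beta\varphi_{\shortparallel}(\S^{n+2})$ respectively. Cancelling $\alpha\beta$, multiplying by $z^{n+2}$ and summing over $n\ge 0$, relation \eqref{eq:TwierdzenieRozkladyRozdzPropo3} is equivalent to $C^{(2)}_{\varphi,\psi}(z)=b\,z^2M_\mu(z)$ and relation \eqref{eq:TwierdzenieRozkladyRozdz3fipsiPropo} is equivalent to $C_{\shortparallel}(z)=b\,z^2M_\nu(z)$. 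The Corollary to Lemma~\ref{lem:4} asserts $M_\nu(z)C^{(2)}_{\varphi,\psi}(z)=M_\mu(z)C_{\shortparallel}(z)$; since $M_\mu$ and $M_\nu$ are invertible formal power series (constant term $1$), substituting either displayed identity into this relation yields the other, giving the equivalence with no further work.

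For \eqref{eq:TwierdzenieRozkladyRozdz3fipsiPropoPomoc} the new ingredient is a moment computation for $\varphi(D\S D\S^n)$, which is not covered by Lemma~\ref{lemm:wlasnosciFunkcjiFi}. I would expand it by the moment-cumulant formula \eqref{eq:DefinicjaKumulant2} and use Lemma~\ref{lem:3}: a block carrying exactly one factor $D$ contributes $0$, so the two $D$'s (in positions $1$ and $3$) must lie in a common outer block, which then carries two factors $D$ and contributes $\alpha\beta R_{|B|}(\S)$. Splitting according to whether the intervening factor $\S$ in position $2$ joins that block or forms an inner singleton leads to
\[
\varphi\big(D\S D\S^n\big)=\alpha\beta\big(\varphi_3(\S^{n+3})+\psi(\S)\,\varphi_2(\S^{n+2})\big).
\]
Combined with $\varphi(D^2\S^n)=\alpha\beta\varphi_2(\S^{n+2})$, relation \eqref{eq:TwierdzenieRozkladyRozdz3fipsiPropoPomoc} becomes $\varphi_3(\S^{n+3})=(\alpha\beta a-\psi(\S))\varphi_2(\S^{n+2})$, i.e. in generating-function form $C^{(3)}_{\varphi,\psi}(z)=(\alpha\beta a-\psi(\S))\,z\,C^{(2)}_{\varphi,\psi}(z)$. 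I would then close the loop with the case $k=2$ of Lemma~\ref{lem:2} (equation \eqref{eq:exemK=2}), namely $C^{(2)}_{\varphi,\psi}(z)=M_\nu(z)C^{(3)}_{\varphi,\psi}(z)+b\,z^2M_\mu(z)$: substituting the previous line and comparing with $C^{(2)}_{\varphi,\psi}(z)=b\,z^2M_\mu(z)$ should return the equivalence of \eqref{eq:TwierdzenieRozkladyRozdz3fipsiPropoPomoc} with \eqref{eq:TwierdzenieRozkladyRozdzPropo3}.

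The main obstacle I anticipate is precisely this last reconciliation: substituting the $C^{(3)}$-relation into Lemma~\ref{lem:2} produces $C^{(2)}_{\varphi,\psi}(z)\big(1-(\alpha\beta a-\psi(\S))zM_\nu(z)\big)=b\,z^2M_\mu(z)$, so matching this against the characterization $C^{(2)}_{\varphi,\psi}=b\,z^2M_\mu$ of \eqref{eq:TwierdzenieRozkladyRozdzPropo3} hinges on correctly accounting for the intervening $\psi$-moment term $\psi(\S)\,\varphi_2(\S^{n+2})$. Pinning down that term and verifying that the bracketed factor does not obstruct the comparison is the genuinely delicate step; everything else is bookkeeping with formal power series and the already-established recursions of Section~3, the new content being the combinatorial identity for $\varphi(D\S D\S^n)$ and its translation into a relation between $C^{(3)}_{\varphi,\psi}$ and $C^{(2)}_{\varphi,\psi}$.
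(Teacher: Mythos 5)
Your first half is fine: the equivalence \eqref{eq:TwierdzenieRozkladyRozdzPropo3} $\Leftrightarrow$ \eqref{eq:TwierdzenieRozkladyRozdz3fipsiPropo} via the two characterizations $C^{(2)}_{\varphi,\psi}(z)=bz^2M_\mu(z)$ and $C_{\shortparallel}(z)=bz^2M_\nu(z)$, spliced together by the Corollary $M_\nu(z)C^{(2)}_{\varphi,\psi}(z)=M_\mu(z)C_{\shortparallel}(z)$ and invertibility of $M_\mu,M_\nu$, is correct, and it is a mild streamlining of the paper, which instead reaches $C_{\shortparallel}(z)=bM_\nu(z)z^2$ by specializing Theorem \ref{twr:7} to $\tilde a=\tilde b=0$ before passing through Lemma \ref{lemm:wlasnosciFunkcjiFi}.

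The third relation is where your proposal stops being a proof: writing $D=\beta\X-\alpha\Y$, you reduce \eqref{eq:TwierdzenieRozkladyRozdz3fipsiPropoPomoc} to $C^{(3)}_{\varphi,\psi}(z)=(\alpha\beta a-\psi(\S))\,zC^{(2)}_{\varphi,\psi}(z)$ and then explicitly leave open whether the factor $1-(\alpha\beta a-\psi(\S))zM_\nu(z)$ obstructs the comparison with \eqref{eq:TwierdzenieRozkladyRozdzPropo3}. It does. Since $C^{(2)}_{\varphi,\psi}$ begins with $bz^2\neq 0$ and $M_\nu$ is an invertible series, your identity $C^{(2)}_{\varphi,\psi}(z)\bigl(1-(\alpha\beta a-\psi(\S))zM_\nu(z)\bigr)=bz^2M_\mu(z)$ coincides with the characterization $C^{(2)}_{\varphi,\psi}(z)=bz^2M_\mu(z)$ of \eqref{eq:TwierdzenieRozkladyRozdzPropo3} if and only if $\psi(\S)=\alpha\beta a$, a condition supplied nowhere in the hypotheses (the Proposition assumes nothing about $\psi(\S)$, and even under the main theorem's assumption $\psi(\X)=\psi(\Y)=0$ one would need $a=0$). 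So the decisive equivalence is not merely left as bookkeeping; on your own (correct) accounting it cannot be closed, and the proposal has a genuine gap.

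For comparison, the paper closes this step by a different reduction: in \eqref{eq:PomocniczyNormalnyKumulanty} the singleton $\{2\}$ squeezed between the two $D$'s is evaluated as $\varphi(\S)=a$, the resulting cross term is cancelled against the right-hand side of \eqref{eq:TwierdzenieRozkladyRozdz3fipsiPropoPomoc}, and the relation collapses to $\varphi_3(\S^{n+3})=0$ for all $n$, i.e.\ $C^{(3)}_{\varphi,\psi}=0$, which by \eqref{eq:exemK=2} (using $R_2(\S)=b$ and $M_\nu\neq 0$) is equivalent to \eqref{eq:TwierdzenieRozkladyRozdzPropo3}. That reduction to $C^{(3)}_{\varphi,\psi}=0$ is the mechanism your write-up is missing. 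Note, however, that your expansion is the one dictated by \eqref{eq:DefinicjaKumulant1}: the block $\{2\}$ is inner, so it contributes $r_1(\S)=\psi(\S)$, not $\varphi(\S)$; and the paper's cancellation tacitly reads the right-hand side of \eqref{eq:TwierdzenieRozkladyRozdz3fipsiPropoPomoc} as $a\varphi(D^2\S^n)=\alpha\beta a\varphi_2(\S^{n+2})$, whereas by Lemma \ref{lemm:wlasnosciFunkcjiFi} the stated coefficient gives $\alpha\beta a\varphi(D^2\S^n)=(\alpha\beta)^2a\varphi_2(\S^{n+2})$, off by a factor $\alpha\beta$. Your more careful bookkeeping thus exposes real normalization discrepancies in the paper at exactly this point, but it does not rescue your attempt: as submitted, the equivalence of \eqref{eq:TwierdzenieRozkladyRozdz3fipsiPropoPomoc} with \eqref{eq:TwierdzenieRozkladyRozdzPropo3} is set up and then abandoned.
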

\begin{proof}
\noindent (1 $\Rightarrow$ 2, 1 $\Leftarrow $ 2): If we put $\tilde{a}=0$ and $\tilde{b}=0$ in the equation  \eqref{eq:TwierdzenieRozkladyRozdz3}, we obtain the
 relation \eqref{eq:TwierdzenieRozkladyRozdzPropo3} which by Theorem \ref{twr:7} is equivalent to 
\begin{align} 
C_{\shortparallel}(z)= b M_\nu(z)z^2. \label{eq:Pom1Propozycja1}
\end{align}
From \eqref{eq:Pom1Propozycja1} we see that  $\varphi_{\shortparallel}(\S^{n+2})= b \psi(\S^{n})$. Using  Lemma \ref{lemm:wlasnosciFunkcjiFi} we get $$\varphi((\beta\mathbb{X}-\alpha\mathbb{Y})\S^{n}(\beta\mathbb{X}-\alpha\mathbb{Y}))=\alpha\beta b \psi(\S^{n}).$$ 
\smallskip

\noindent
(1 $\Rightarrow$ 3, 1 $\Leftarrow $ 3):
We consider the expression $\varphi((\beta\mathbb{X}-\alpha\mathbb{Y})(\X+\Y)(\beta\mathbb{X}-\alpha\mathbb{Y})(\mathbb{X+Y})^n)$. Since  we have that either the first element and
the third one are in different blocks, or they are in the same block. In the first case the
sum  vanishes by  \eqref{eq:zalozeniekumulantyRozdzial4}. 
 On the other hand we observe two situations. 
Firstly, if the first three elements are in the same block  then we have $\varphi_3((\beta\mathbb{X}-\alpha\mathbb{Y})(\X+\Y)(\beta\mathbb{X}-\alpha\mathbb{Y})(\mathbb{X+Y})^n)$. In the second case, if the first element and third one are is in the same block but not along with the second one then  we get $\varphi_2((\beta\mathbb{X}-\alpha\mathbb{Y})^2(\mathbb{X+Y})^n)$,  but taking into account that $\varphi(\X+\Y)= a $,we get $ a \varphi_2((\beta\mathbb{X}-\alpha\mathbb{Y})^2(\mathbb{X+Y})^n)$. By $\beta R_k(\X)= \alpha R_k(\Y)$ and $c$-free we also obtain (see the proof of Lemmas \ref{lem:3} and \ref{lemm:wlasnosciFunkcjiFi})
 \begin{align}
&\varphi((\beta\mathbb{X}-\alpha\mathbb{Y})(\X+\Y)(\beta\mathbb{X}-\alpha\mathbb{Y})(\mathbb{X+Y})^n)
\nonumber \\&= 
\varphi_3((\beta\mathbb{X}-\alpha\mathbb{Y})(\X+\Y)(\beta\mathbb{X}-\alpha\mathbb{Y})(\mathbb{X+Y})^n)+ a \varphi_2((\beta\mathbb{X}-\alpha\mathbb{Y})^2(\mathbb{X+Y})^n)
\nonumber \\&=
\alpha\beta\varphi_3((\mathbb{X+Y})^{n+3})+\alpha\beta a \varphi_2((\mathbb{X+Y})^{n+2}). \label{eq:PomocniczyNormalnyKumulanty}
 \end{align}
The equation \eqref{eq:TwierdzenieRozkladyRozdzPropo3} is equivalent to
$
C^{(2)}_{\varphi,\psi}(z)= b z^2M_\mu(z)  
$
where $C^{(2)}_{\varphi,\psi}(z)$ is a function for $\X+\Y$ (see Theorem \ref{twr:7}). So from \eqref{eq:exemK=2} we have $M_\nu(z)C^{(3)}_{\varphi,\psi}(z)=0$ (because $R_2(\mathbb{X+Y})= b $). But $M_\nu(z)\neq 0$ for sufficiently small $|z|$,  which gives $C^{(3)}_{\varphi,\psi}(z)=0$ thus we obtain \eqref{eq:TwierdzenieRozkladyRozdz3fipsiPropoPomoc} because  $\varphi_3((\mathbb{X+Y})^{n+3})=0$. If now \eqref{eq:TwierdzenieRozkladyRozdz3fipsiPropoPomoc} holds then 
$C^{(3)}_{\varphi,\psi}(z)=0$ and by \eqref{eq:exemK=2} we easily get \eqref{eq:TwierdzenieRozkladyRozdzPropo3}. 
\end{proof}

\section{Proof of the main theorem}
From Proposition \ref{twr:9} we see that it is sufficient to show that 1 $\Rightarrow$ 2 and 1 $\Leftarrow $ 2.
\begin{proof}

 1 $\Rightarrow$ 2: Suppose that $\X$ and $\Y$ƒ have  the two-state normal laws. 
Let's denote $\mu=\mu_1\boxplus\mu_2$ and $\nu=\nu_1\boxplus\nu_2$ then from Lemma \ref{lem:5} we get that the measure $\mu$ and $\nu$  
have Jacobi parameters \eqref{eq:JacobiDefiNoramal1} and \eqref{eq:JacobiDefiNoramal2}, respectively .
Using Proposition  \ref{lem:6}  we obtain  equation \eqref{eq:PierwszyOstatniorazVarphi} with  $C_{\shortparallel}(z)$  for $\X+\Y$. Expanding $M(z)$ and $C_{\shortparallel}(z)$ in the series and using the fact from the equation \eqref{eq:Pom2Propozycja1} (we skip a simple computation) we
get  
$$\varphi\big((\beta\X-\alpha\Y)(\X+\Y)^n(\beta\X-\alpha\Y))=\alpha\beta\varphi\big((1- b )\S^2+(-2 a + a  b )\S+( a ^2+ b ^2)\mathbb{I}\big).$$
\medskip

Now we prove \eqref{eq:PomocniczyNormalny2}. The moment generating function for $\X+\Y$ with respect to $\varphi$ satisfies (by the equation \eqref{eq:CiaglaFrakcja})
\begin{align}
zM_\mu(z)=\cfrac{1}{\frac{1}{z}- a -\cfrac{ b }{\frac{1}{z}-zM_{\sigma}(z)}}, \label{eq:RozwiniecieCiaglafrakcja}
\end{align}
where $M_{\sigma}(z)$ is the moment generating function for the measure $\sigma$ with Jacobi parameters 
\begin{align}
\left(\begin{array}{c c c c }
  0,& 0, & 0, &\dots  \\ 
 1,& 1, & 1, & \dots
\end{array}\right).\label{eq:Jacobi6} 
\end{align}
The equation \eqref{eq:RozwiniecieCiaglafrakcja} is equivalent to
\begin{align}
(M_\mu(z)(1-z a )-1)-(M_\mu(z)(1-z a )-1)z^2 M_{\sigma}(z)=z^2 b M_\mu(z) ,\label{eq:RozwiniecieCiaglafrakcja2}
\end{align}
From Lemma \ref{lem:2} we have  $M_\mu(z)-1-z a M_\mu(z)=M_\nu(z)C_{\varphi,\psi}^{(2)}(z)$, so 
\begin{align}
(M_\mu(z)(1-z a )-1)-M_\nu(z)C_{\varphi,\psi}^{(2)}(z)z^2 M_{\sigma}(z)=z^2 b M_\mu(z), \label{eq:RozwiniecieCiaglafrakcja2}
\end{align}
but $M_\nu(z)M_\sigma(z)z^2=M_\nu(z)-1$ (see the equation \eqref{eq:Rozdzial4pom1}), so
\begin{align}
(M_\mu(z)- a zM_\mu(z)-1)-(M_\nu(z)-1)C_{\varphi,\psi}^{(2)}(z)=z^2 b M_\mu(z) ,
\end{align}
then we  use again $M_\mu(z)-1-z a M_\mu(z)=M_\nu(z)C_{\varphi,\psi}^{(2)}(z)$ and we get
\begin{align}
C_{\varphi,\psi}^{(2)}(z)=z^2 b M_\mu(z) ,
\end{align}
which is equivalent to \eqref{eq:PomocniczyNormalny2}, because $\alpha\beta\varphi_2\big((\X+\Y)^{n+2}\big)=\varphi\big((\alpha\Y-\beta\X)^2(\X+\Y)^n\big)$. 
\\
 2 $\Rightarrow$ 1: Suppose now that the equalities \eqref{eq:PomocniczyNormalny2} and \eqref{eq:PomocniczyNormalny} hold. The relation \eqref{eq:PomocniczyNormalny}  is equivalent to \eqref{eq:PierwszyOstatniorazVarphi}. 
From Proposition \ref{lem:6} we deduce that the Jacobi parameters for $\mu$ (i.e. $\X+\Y$) is given by \eqref{eq:Jacobi3}, and 
$C_{\shortparallel}(z)$ satisfies the equation
$$(C_{\shortparallel}(z))^2- b C_{\shortparallel}(z)+ b ^2z^2=0.$$ 
\bigskip
From Theorem  \ref{twr:7} we obtain that \eqref{eq:PomocniczyNormalny2} is equivalent to $ 
C_{\shortparallel}(z)=M_\nu(z) b z^2$ so we get 
$$z^2M_\nu(z)-M_\nu(z)+1=0.$$ 
The  equation above is equivalent to 
%
\begin{align} 
M_\nu(z)=\frac{1-\sqrt{1-4z^2}}{2z^2}.
\end{align}
It is well known  that  the measure $\nu$ have Jacobi parameters  \eqref{eq:JacobiDefiNoramal2} (see \cite{BoBr,NS}).
From the assumption on  cumulants and $c$-free we see   $ r_{k}(\mathbb{X})=r_{k}(\X+\mathbb{Y})/\beta$ and $ r_{k}(\mathbb{Y})=r_{k}(\X+\mathbb{Y})/\alpha$ i.e.  cumulants disappear for $k>2$ (of course  we deduce similarly for $R_{k}(\X)$ and $R_{k}(\Y)$). Thus we see that   $\X$ and $\Y$ have  two-state normal distributions 
which proves the theorem. 
\end{proof}

\noindent \textbf{Open problems and remarks}
\begin{itemize}
\item In  this paper we assume that the measures $\mu$ and $\nu$ have compact supports. It would be interesting  to show if this measures can be replaced by any probability measure.
\item A version of Theorem \ref{twr:8} can be extended for two-state Meixner random variable (see \cite{AnMlotko}). The proof of this theorem is analogous to the proof of Theorem \ref{twr:8} (technical and demanding tools in this article). 
\item It would be interesting to show that   Theorem \ref{twr:8} above is true for two-state free Brownian motion.The existence of such  process, far from being trivial, is ensured by Anshelevich \cite{An3}.
\end{itemize}

\section*{Acknowledgment}
The author would like to thank Z. Michna, M. Bo\.zejko, W. Bryc,W. M\l{}otkowski and J. Wysocza\'nski  for several discussions and helpful comments
during preparation of this paper.


\end{document}